\tikzset{every picture/.style={line width=1pt},every node/.style={line width=1.5pt,circle,fill=none,draw,inner sep=0,minimum size=13pt}}
\definecolor{color1}{rgb}{0.62068966,0.06896552,1}
\definecolor{color2}{rgb}{0,0.5862069,0}
\definecolor{color3}{rgb}{0.75862069,0.20689655,0}
\definecolor{color4}{rgb}{0.03448276,0.51724138,0.72413793}
\definecolor{color5}{rgb}{0.68965517,0.5862069,0}
\definecolor{color6}{rgb}{0.72413793,1,0.10344828}
\newcommand{\appendixspace}{\hspace*{2em}}
\def\E{\mathbb{E}}
\def\E{\mathbb{E}}
\def\P{\mathbb{P}}
\def\eps{\epsilon}
\def\1{\mathbf{1}}
\def\lam {\lambda}
\def\Lam{\Lambda}
\def\tce{t_c + \eps}
\def\tce2{t_c + \frac{\eps}{2}}
\newtheorem*{theorem*}{Theorem}
\newtheorem{theorem}{Theorem}
\newtheorem{lemma}{Lemma}
\newtheorem{cor}{Corollary}
\newtheorem{prop}{Proposition}
\newtheorem*{prop*}{Proposition}
\newtheorem{conj}{Conjecture}
\newtheorem{claim}{Claim}
\begin{document}
\title[Extremes of the internal energy of the Potts model]{Extremes of the internal energy of the Potts model on cubic graphs}

\author{Ewan Davies}
\author{Matthew Jenssen}
\author{Will Perkins}
\author{Barnaby Roberts}
\address{London School of Economics}
\email{\{e.s.davies,m.o.jenssen,b.j.roberts\}@lse.ac.uk }
\address{University of Birmingham}
\email{math@willperkins.org}
\date{\today}
\keywords{Potts model, partition function, graph colorings, graph homomorphims, Ising model}

\begin{abstract}
We prove tight upper and lower bounds on the internal energy per particle (expected number of monochromatic edges per vertex) in the anti-ferromagnetic Potts model on cubic graphs at every temperature and for all $q \ge 2$. This immediately implies corresponding tight bounds on the anti-ferromagnetic Potts partition function.  

Taking the zero-temperature limit gives new results in extremal combinatorics:  the number of $q$-colorings of a $3$-regular graph, for any $q \ge 2$, is maximized by a union of $K_{3,3}$'s. 
This proves the $d=3$ case of a conjecture of Galvin and Tetali. 
\end{abstract}

\maketitle
\thispagestyle{empty}

\section{The Ising and Potts models}

The Potts model is a probabilistic model of interacting \emph{spins} on a graph. Here, we use the term \emph{color} instead of spin to highlight a connection to extremal combinatorics which we cover in Section~\ref{sec:colorings}. 
Let $G=(V,E)$ be a graph and $\sigma \in [q]^{V(G)}$ a coloring (not necessarily proper) of the vertices of $G$ with $q$ possible colors.  Let $m(\sigma)$ denote the number of monochromatic edges of $G$ under $\sigma$. Then the $q$-color Potts model partition function is:
\begin{align*}
Z_G^q(\beta)&= \sum_{\sigma \in [q]^{V(G)} } e^{ - \beta m(\sigma) } .
\end{align*}
The parameter $\beta$ is the \emph{inverse temperature} and  the model is \emph{antiferromagnetic} if $\beta >0$ and \emph{ferromagnetic} if $\beta <0$.  (For general statistical physics terminology, we refer the reader to Chapter 2 of~\cite{mezard2009information}, for example).  The Potts partition function also plays an important role in graph theory as it is an evaluation of the Tutte polynomial of $G$.

The Potts model~\cite{potts1952some} (with no external field) is a random $q$-coloring $\sigma$ of $V(G)$ chosen according to the distribution
\[ \sigma \mapsto \frac{ e^{-\beta m(\sigma)}   }{  Z_G^q(\beta)}.\]
Thus the partition function $Z_G^q(\beta)$ is the normalizing constant that makes this a probability distribution. 
When $\beta$ is positive, the model prefers colorings with fewer monochromatic edges, and the effect is intensified as $\beta$ gets large.
When $\beta$ is negative (the ferromagnetic Potts model), the distribution is biased towards colorings with more monochromatic edges.

The Potts model generalizes the \emph{Ising model}~\cite{ising1925beitrag} (the case $q=2$).  See~\cite{wu1982potts} for a survey of the Potts model. 

The negative of the logarithmic derivative of $Z_G^q$ with respect to $\beta$  gives the expected number of monochromatic edges, or the \emph{internal energy} of the model.  If we scale by the number of vertices, this gives the \emph{internal energy per particle}, $U^q_G(\beta)$:

\begin{align*}
U^q_G(\beta) &:= -\frac{1}{|V(G)|} \frac{d}{d \beta}  \big(\log Z_G^q(\beta)\big)\\
&= -\frac{1}{|V(G)|} \frac{  1}{ Z_G^q(\beta)}\frac{ d}{d \beta}\big( Z_G^q(\beta)\big)\\
 &= \frac{1}{|V(G)|} \frac{  \sum_{\sigma \in [q]^{V(G)}} m(\sigma )e^{- \beta m(\sigma) } }{ Z_G^q(\beta)} \\
&=  \frac{1}{|V(G)|} \E_\sigma  [m(\sigma)].
\end{align*}

When $\beta=0$ there are no interactions in the model, and  $Z_G^q(0) = q^{|V(G)|}$ for all $G$.  Starting from here, we can integrate the internal energy per particle to obtain the scaled logarithm of the partition function, or the \emph{free energy per particle}, $F^q_G(\beta) := \frac{1}{|V(G)|} \log Z^q_G(\beta)$. 

\begin{align}
\label{eq:integrateFormula}
F^q_G(\beta) = \frac{1}{|V(G)|} \log  Z^q_G(\beta) &= \log q -  \int_{0}^\beta U^q_G(t) \, dt .
\end{align}

In this paper we  derive  tight bounds on $U^q_G(\beta)$ for cubic ($3$-regular) graphs in the anti-ferromagnetic ($\beta>0$) regime. From \eqref{eq:integrateFormula} these bounds immediately imply corresponding tight bounds on the free energy of the Ising and Potts models and hence the respective partition functions.  We determine, for every $q$, the maximum and minimum of both the internal energy and the free energy per particle as well as the family of graphs that achieve these bounds. 

Recall that $K_{d,d}$ is the complete $d$-regular bipartite graph on $2d$ vertices and $K_{d+1}$ is the complete graph on $d+1$ vertices.

\begin{theorem}
\label{thm:d3Anti}
For any cubic graph $G$, any $q \ge 2$, and any $\beta >0$,
\[ U^q_{K_{3,3}}(\beta) \le U^q_G(\beta) \le U^q_{K_{4}}(\beta).\]
Furthermore, the respective equalities hold if and only if $G$ is a union of $K_{3,3}$'s or a union of $K_{4}$'s.  As a corollary via \eqref{eq:integrateFormula}, we have
\[ F^q_{K_4}(\beta) \le F^q_G(\beta) \le F^q_{K_{3,3}}(\beta).\]
\end{theorem}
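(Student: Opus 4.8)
The plan is to use the occupancy (local) method: rather than bounding the partition function directly, one bounds the internal energy per particle by writing it as an average of a purely local quantity at a uniformly random vertex, and then optimises over the local behaviours a cubic graph can exhibit. Concretely, $U^q_G(\beta)=\tfrac12\,\E_v\E_\sigma[m_v(\sigma)]$, where $v$ is a uniformly random vertex, $m_v(\sigma)$ is the number of monochromatic edges incident to $v$, and the $\tfrac12$ corrects for double counting. By the spatial Markov property of the Potts model, conditionally on the colouring of $N(v)$ having colour-profile $(a_1,\dots,a_q)$ (so $\sum_i a_i=3$), the vertex $v$ gets colour $i$ with probability $e^{-\beta a_i}\big/\sum_j e^{-\beta a_j}$, and hence $\E[m_v\mid \sigma_{N(v)}]=\sum_i a_i e^{-\beta a_i}\big/\sum_j e^{-\beta a_j}$. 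For $d=3$ this depends only on the \emph{shape} of the profile, one of $(3)$, $(2,1)$, $(1,1,1)$, so writing $p_3,p_{21},p_{111}$ for the probabilities of these shapes averaged over $v$ (with $p_{111}=0$ when $q<3$), we get $U^q_G(\beta)=\tfrac12\big(p_3g_3+p_{21}g_{21}+p_{111}g_{111}\big)$ for explicit constants $g_3,g_{21},g_{111}$ depending only on $\beta$ and $q$. Thus the quantity to bound is, for each fixed $\beta$ and $q$, a \emph{linear} functional of the local distribution $(p_3,p_{21},p_{111})$.

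The simplex constraint $p_3+p_{21}+p_{111}=1$ is far too weak by itself, so the crux is to extract further linear relations valid for \emph{every} cubic graph. I would obtain these by revealing the three neighbours of $v$ one at a time in a uniformly random order and invoking the spatial Markov property after each reveal: this couples the shape distributions after $0,1,2,3$ reveals, and relates the conditional colour law of a freshly revealed neighbour to the local distribution seen from a random vertex, which by $3$-regularity closes back onto the same averaged quantities. For $d=3$ this should yield a small explicit system which, together with the exact Potts marginal rule, confines $(p_3,p_{21},p_{111})$ to a low-dimensional polytope. It then remains to maximise and minimise the linear functional over that polytope and to verify that the extrema are attained exactly at the local distributions realised by a disjoint union of $K_{3,3}$'s and of $K_4$'s respectively; for these the probabilities $p_3,p_{21},p_{111}$ are computed in closed form from the tiny partition functions $Z^q_{K_{3,3}}(\beta)$ and $Z^q_{K_4}(\beta)$.

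For the equality cases, if $U^q_G(\beta)$ meets one of the bounds then the local profile at essentially every vertex must coincide with that of the extremiser, and a local-to-global rigidity argument forces $G$ to be a disjoint union of $K_{3,3}$'s (resp.\ of $K_4$'s). Finally the free-energy inequalities follow by integrating the bounds on $U$ via \eqref{eq:integrateFormula}, the inequalities reversing because of the minus sign, using $F^q_G(0)=\log q$ for all $G$.

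The main obstacle will be the optimisation step: proving the scalar inequalities that single out $K_{3,3}$ and $K_4$ uniformly over all $q\ge 2$ and all $\beta\in(0,\infty)$, handling the degenerate small-$q$ cases separately (notably $q=2$ and $q=3$, where some shapes are absent or some $g_s$ are constant in $\beta$), and checking that the constraints derived above are strong enough that the optimum of the resulting linear programme is genuinely $K_{3,3}/K_4$ rather than a spurious vertex of the polytope.
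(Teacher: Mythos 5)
Your high-level strategy---express $U_G$ as an expectation of a local quantity, enumerate the possible local behaviours, and optimise a linear functional over a polytope of local distributions constrained by consistency conditions---is exactly the philosophy of the paper, and your reduction $U^q_G(\beta)=\tfrac12(p_3 g_3+p_{21}g_{21}+p_{111}g_{111})$ is correct. But the specific choice of local statistic is too lossy, and this is not a technicality: it destroys the one constraint that makes the method work.

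The paper's local view deliberately does \emph{not} reveal $\sigma$ on $\{v\}\cup N(v)$. It reveals only the graph on $\{v\}\cup N(v)$ and the multiset of colours on the \emph{second shell} $N(u)\setminus(\{v\}\cup N(v))$ for each $u\in N(v)$. Because of the Markov property, this boundary information determines the conditional law of the colouring of $\{v\}\cup N(v)$, so one can define \emph{two} different functionals of the local view $C$: the conditional expectation $U^v_C$ of the monochromatic count at $v$, and the average $U^N_C$ of the conditional expectations of the monochromatic counts at the three $u\in N(v)$. The key consistency condition is $\E_C[U^v_C]=\E_C[U^N_C]$, which holds for every cubic graph by vertex-transitivity of the random-vertex choice, and which is a genuinely new linear constraint (the two functions of $C$ differ even though their expectations agree). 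This single constraint, together with $\sum p_C=1$, suffices for $d=3$; solving the resulting LP over the $35$ equivalence classes of local views by exhibiting an explicit dual multiplier $\Delta^*$ is the technical work of the proof.

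In your setup you reveal $\sigma_{N(v)}$ itself. Then $\E[m_v\mid\sigma_{N(v)}]$ is computable, but $\E[m_u\mid\sigma_{N(v)}]$ for $u\in N(v)$ is not: the conditional law of $\sigma_{N(u)}$ given $\sigma_{N(v)}$ depends on the graph beyond the first shell, so there is no analogue of $U^N_C$ and hence no analogue of the paper's crucial constraint. Your proposed ``reveal the neighbours one at a time in a random order and invoke the Markov property after each reveal'' does not repair this: the Markov property for $\sigma(u)$ requires conditioning on all of $\sigma_{N(u)}$, which your revealed information does not contain, so no closed linear relation on $(p_3,p_{21},p_{111})$ emerges. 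More structurally, the constraint $\E_C[U^v_C]=\E_C[U^N_C]$ does not push forward to the three-dimensional shape simplex: two local-view distributions with identical $(p_3,p_{21},p_{111})$ can have different values of $\E_C[U^N_C]$, so the information you have discarded is exactly the information the proof needs. Working with the finer statistic (the boundary colouring, unrevealed $\{v\}\cup N(v)$) is not an optional refinement but the essential idea that your outline is missing.
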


We conjecture that these bounds extend to higher regularity $d$ (the case $d=2$ is simply a calculation, see Section~\ref{sec:d2q2}).
\begin{conj}
\label{conj:internal}
For any $d$-regular graph $G$, any $q \ge 2$, and any $\beta >0$,
\[ U^q_{K_{d,d}}(\beta) \le U^q_G(\beta) \le U^q_{K_{d+1}}(\beta),\]
and in particular, 
\[ F^q_{K_{d+1}}(\beta) \le F^q_G(\beta) \le F^q_{K_{d,d}}(\beta).\]
\end{conj}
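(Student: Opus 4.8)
The plan is to run an \emph{occupancy-method} relaxation: write the internal energy as the expectation of an explicit bounded function of a local configuration under a measure that every $d$-regular graph induces, pass to a convex family of such measures, and solve the resulting linear optimization, hoping the extremizers are exactly the measures realized by $K_{d,d}$ and $K_{d+1}$.

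\emph{Step 1 (local reformulation).} For a $d$-regular $G$ on $n$ vertices and a uniformly random edge $uv$,
\[
U^q_G(\beta)=\frac{1}{n}\sum_{uv\in E(G)}\P_\sigma[\sigma(u)=\sigma(v)]=\frac{d}{2}\,\P_\sigma[\sigma(u)=\sigma(v)].
\]
Conditioning the Potts measure on the coloring of $V(G)\setminus\{u,v\}$, the spatial Markov property shows that the law of $(\sigma(u),\sigma(v))$ depends on the conditioning only through the pair $(N,M)$ of color-count vectors of $N(u)\setminus\{v\}$ and $N(v)\setminus\{u\}$, and
\[
\P_\sigma[\sigma(u)=\sigma(v)\mid N,M]=\frac{\sum_{a\in[q]}e^{-\beta(N_a+M_a+1)}}{\sum_{a,b\in[q]}e^{-\beta(N_a+M_b+\1[a=b])}}=:g_{d,q,\beta}(N,M).
\]
Thus $U^q_G(\beta)=\tfrac d2\,\E_{\mu_G}[g_{d,q,\beta}]$ for an explicit probability measure $\mu_G$ on the finite set $\Omega_{d,q}$ of pairs of nonnegative integer vectors summing to $d-1$.

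\emph{Step 2 (relaxation and optimization).} I would identify a convex polytope $\mathcal K_{d,q,\beta}$ of probability measures on $\Omega_{d,q}$ that contains $\mu_G$ for all $d$-regular $G$: the normalization constraint, the edge-reversal symmetry $(N,M)\mapsto(M,N)$, and the linear consistency relations obtained by re-rooting the local view at a neighbour of $u$ (the Potts analogue of the tree recursions used in the hard-core and monomer--dimer occupancy arguments). One then maximizes and minimizes $\mu\mapsto\E_\mu[g_{d,q,\beta}]$ over $\mathcal K_{d,q,\beta}$, aiming to show the maximizer is $\mu_{K_{d+1}}$ and the minimizer is $\mu_{K_{d,d}}$ (for $K_{d+1}$ both neighbourhoods coincide with $V(G)\setminus\{u,v\}$, for $K_{d,d}$ they are the two opposite sides, so both local measures are completely explicit). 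The free-energy bounds then follow by integrating in $\beta$ through \eqref{eq:integrateFormula}. For the equality cases one records which faces of $\mathcal K_{d,q,\beta}$ are attained and feeds this back into the structure of $G$; as the extremal faces should be $0$-dimensional, this would force $G$ to be a disjoint union of copies of $K_{d,d}$ or of $K_{d+1}$, and one wants the argument to work uniformly in $\beta$.

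\emph{Step 3 (the hard part).} The main obstacle is the optimization in Step 2. For $d=3$, $\Omega_{3,q}$ is small and $\mathcal K_{3,q,\beta}$ is low-dimensional enough that the extremality of $K_{3,3}$ and $K_4$ can be certified explicitly (via an explicit dual certificate or a local-swap argument); for general $d$ the dimension grows with $d$ and $q$, and it is far from clear that the naive linear constraints suffice to pin down the conjectured optima. Overcoming this will likely require either additional valid inequalities for $\mathcal K_{d,q,\beta}$ or a convexity/interpolation input -- for instance that $F^q_G$ varies monotonically under covering maps of the $d$-regular tree, in the spirit of Csikv\'ari's and Ruozzi's monotonicity theorems, which might yield one of the two directions but typically not both. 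The difficulty is underscored by the fact that the zero-temperature specialization $\beta\to\infty$ is precisely the Galvin--Tetali conjecture on the number of $q$-colorings of $d$-regular graphs, which remains largely open for $d\ge 4$; so a genuinely new ingredient beyond the $d=3$ argument is needed, and in addition the equality analysis must be made uniform in the temperature rather than established temperature-by-temperature.
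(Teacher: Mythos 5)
The statement you are addressing is Conjecture~\ref{conj:internal}, which the paper itself does not prove: the paper establishes only the case $d=3$ (Theorem~\ref{thm:d3Anti}) and explicitly leaves $d\ge 4$ open. Your proposal is, in essence, the paper's own method restated in an edge-rooted form: Step 1 is a correct local reformulation (your $g_{d,q,\beta}$ and the identity $U^q_G=\tfrac d2\,\E_{\mu_G}[g_{d,q,\beta}]$ are fine), and Step 2 is exactly the occupancy-method linear relaxation that the authors formulate with their ``local views'' and consistency conditions. But the content of a proof would be the optimization in Step 2, and you do not carry it out for any $d\ge 4$; you only assert that for $d=3$ it ``can be certified explicitly'' and concede in Step 3 that for general $d$ it is unclear the linear constraints suffice. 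So this is a research plan, not a proof, and it stops exactly where the open problem begins.

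Moreover, the paper gives concrete evidence that your Step 2, as formulated, cannot succeed for the full range of the conjecture: even after adding the much larger family of $q$-partition consistency constraints (Section~\ref{sec:Alld}), the authors report values of $\beta$ with $d\ge 4$ and $q\le d$ for which the LP minimum is strictly below $U^q_{K_{d,d}}(\beta)$, so the relaxation is provably not tight there and no list of ``re-rooting'' constraints of the kind you describe is known to repair it. Since the conjecture claims the bound for all $q\ge 2$, including $q\le d$, the missing ingredient is not merely computational effort on a larger LP but a genuinely new idea (new valid inequalities, or a different argument altogether) for the regime $q\le d$; your appeal to Csikv\'ari--Ruozzi-type monotonicity is speculative and, as you note yourself, would at best give one direction. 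A further small point: the equality/uniqueness analysis you sketch is not needed for the statement as written, while the upper bound $U^q_G\le U^q_{K_{d+1}}$ would also require its own certificate (for $d=3$ the paper proves it via the explicit polynomial identities of Claim~\ref{claim:max}), which your plan likewise leaves unproven for $d\ge 4$.
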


If we restrict ourselves to bipartite regular graphs, then Galvin (building on \cite{galvin2004weighted,kahn2001entropy}) proved that the maximizer of the free energy is $K_{d,d}$.   
\begin{theorem}[Galvin~\cite{galvin2006bounding}]
\label{thm:Galvin}
For any $d$-regular bipartite $G$, any $\beta$ and any $q \ge 2$,
\[F^q_G (\beta ) \le F^q_{K_{d,d}}(\beta) .\]
\end{theorem}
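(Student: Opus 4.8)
\emph{Proof plan.}
The plan is to follow the entropy (Shearer's-lemma) method of Kahn and Galvin--Tetali. Write $n=|V(G)|$ and fix a bipartition $V(G)=A\sqcup B$ of $G$; since $G$ is $d$-regular and bipartite, $|A|=|B|=n/2$ (applied componentwise if $G$ is disconnected). It is convenient to record $Z^q_G(\beta)$ as a weighted homomorphism count: setting $w_{ij}:=e^{-\beta\1[i=j]}$ for $i,j\in[q]$, we have $Z^q_G(\beta)=\sum_{\sigma\in[q]^{V(G)}}\prod_{uv\in E(G)}w_{\sigma_u\sigma_v}$, where $\sigma_u$ denotes the colour of $u$; the weights $w_{ij}$ are positive for every real $\beta$, so the argument never uses the sign of $\beta$. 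Let $\sigma$ be a random colouring drawn from the Potts distribution on $G$, and let $H(\cdot)$ denote Shannon entropy. Since the weight of $\sigma$ equals $Z^q_G(\beta)\cdot\P[\sigma]$, taking logs and expectations gives the identity $\log Z^q_G(\beta)=H(\sigma)+\sum_{uv\in E(G)}\E[\log w_{\sigma_u\sigma_v}]$.

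Next I would exploit bipartiteness. Conditioned on $\sigma_A:=(\sigma_u)_{u\in A}$, the colours $(\sigma_v)_{v\in B}$ are independent, and the conditional law of $\sigma_v$ depends only on $\sigma_{N(v)}$ and is proportional to $\prod_{u\in N(v)}w_{\sigma_u\sigma_v}$. Hence the chain rule, together with this conditional independence, yields $H(\sigma)=H(\sigma_A)+\sum_{v\in B}H(\sigma_v\mid\sigma_A)=H(\sigma_A)+\sum_{v\in B}H(\sigma_v\mid\sigma_{N(v)})$, while Shearer's entropy lemma applied to the cover $\{N(v):v\in B\}$ of $A$ --- which covers every vertex of $A$ exactly $d$ times --- gives $H(\sigma_A)\le\frac1d\sum_{v\in B}H(\sigma_{N(v)})$. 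Assigning each edge to its (unique) endpoint in $B$ and substituting into the identity above,
\[
\log Z^q_G(\beta)\;\le\;\sum_{v\in B}\left[\frac1d\,H(\sigma_{N(v)})+H(\sigma_v\mid\sigma_{N(v)})+\sum_{u\in N(v)}\E[\log w_{\sigma_u\sigma_v}]\right].
\]

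It then remains to bound each bracket by $\frac1d\log Z^q_{K_{d,d}}(\beta)$. Fix $v\in B$ and let $\mu_v$ be the law of $\sigma_{N(v)}$ on $[q]^{N(v)}$. Using that $\sigma_v$ given $\sigma_{N(v)}=x$ is proportional to $\prod_{u\in N(v)}w_{x_u\sigma_v}$, a short calculation collapses the last two terms to a single local partition function: $H(\sigma_v\mid\sigma_{N(v)})+\sum_{u\in N(v)}\E[\log w_{\sigma_u\sigma_v}]=\E_{x\sim\mu_v}[\log W(x)]$, where $W(x):=\sum_{y\in[q]}\prod_{u\in N(v)}w_{x_uy}$. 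The $v$-th bracket is therefore $\frac1d\sum_x\mu_v(x)\log\!\big(W(x)^d/\mu_v(x)\big)$, which by Jensen's inequality (concavity of $\log$) is at most $\frac1d\log\sum_x W(x)^d$. Finally, expanding $Z^q_{K_{d,d}}(\beta)$ by first summing over the colours on one side of $K_{d,d}$ shows $\sum_{x\in[q]^d}W(x)^d=Z^q_{K_{d,d}}(\beta)$, so each bracket is at most $\frac1d\log Z^q_{K_{d,d}}(\beta)$; summing over the $n/2$ vertices of $B$ yields $\log Z^q_G(\beta)\le\frac{n}{2d}\log Z^q_{K_{d,d}}(\beta)$, i.e.\ $F^q_G(\beta)\le F^q_{K_{d,d}}(\beta)$.

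The real content lies in the bookkeeping of the middle step: the decision to split $\log Z$ as entropy plus expected log-weight, to charge each edge to its $B$-endpoint, and to package a centre's conditional entropy with its incident edge-weights so that the per-star contribution telescopes to $\E[\log W]$; once the problem is in this form, recognising $\sum_x W(x)^d=Z^q_{K_{d,d}}(\beta)$ lets Jensen finish it, and the only place demanding care is getting the Shearer multiplicity (exactly $d$) right. Galvin's statement asserts no equality case, but one could extract it by tracking equality in Shearer's lemma and in Jensen's inequality, which forces $\mu_v\propto W(\cdot)^d$ on every star and ultimately forces $G$ to be a disjoint union of copies of $K_{d,d}$.
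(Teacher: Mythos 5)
The paper cites Theorem~\ref{thm:Galvin} as a known result of Galvin (building on Kahn and Galvin--Tetali) and does not reproduce a proof, so there is no proof in the paper to compare against. Your argument is a correct and complete rendition of the standard entropy/Shearer's-lemma proof that is the basis of the cited result: the variational identity $\log Z^q_G=H(\sigma)+\sum_{uv\in E}\E[\log w_{\sigma_u\sigma_v}]$, conditional independence of $\sigma_B$ given $\sigma_A$, Shearer on the $d$-fold cover $\{N(v):v\in B\}$ of $A$, the collapse of $H(\sigma_v\mid\sigma_{N(v)})+\sum_{u\in N(v)}\E[\log w_{\sigma_u\sigma_v}]$ to $\E_{x\sim\mu_v}[\log W(x)]$, Jensen, and the identification $\sum_{x\in[q]^d}W(x)^d=Z^q_{K_{d,d}}(\beta)$ all check out (and indeed none of them uses the sign of $\beta$, consistent with the theorem's claim for all real $\beta$). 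One small point worth making explicit: the identity $H(\sigma_v\mid\sigma_A)=H(\sigma_v\mid\sigma_{N(v)})$ relies on the Markov property that $\sigma_v$ is conditionally independent of $\sigma_{A\setminus N(v)}$ given $\sigma_{N(v)}$, which you invoke implicitly when you say the conditional law of $\sigma_v$ ``depends only on $\sigma_{N(v)}$''; and in the Jensen step one should formally sum over the support of $\mu_v$, though for finite $\beta$ every $x$ has $\mu_v(x)>0$ so nothing is lost.
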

For lower bounds on the ferromagnetic Ising and Potts free energy over regular, bipartite graphs, where the infimum is the `Bethe free energy', or the free energy of the infinite $d$-regular tree, see Ruozzi~\cite{ruozzi2012bethe,ruozzi2013beyond} and Csikv{\'a}ri~\cite{csikvari2016extremal}.

A bound such as Theorem~\ref{thm:Galvin} was known without the bipartite restriction in one case previously: in the anti-ferromagnetic Ising model. An extension of Galvin's result by Zhao~\cite{zhao2011bipartite} using the `bipartite swapping trick' gives the following.  
\begin{theorem}[Zhao~\cite{zhao2011bipartite}]
For any $d$-regular graph $G$, $\beta>0$, and $q=2$ (the Ising model),
\[ F^q_G (\beta ) \le F^q_{K_{d,d}}(\beta) .\]
\end{theorem}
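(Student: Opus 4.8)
The plan is to deduce the bound from Galvin's Theorem~\ref{thm:Galvin} via the bipartite double cover. Let $G\times K_2$ be the graph on $V(G)\times\{0,1\}$ in which $(u,i)$ and $(v,j)$ are adjacent exactly when $uv\in E(G)$ and $i\neq j$. If $G$ is $d$-regular then $G\times K_2$ is $d$-regular and bipartite (two-colored by the last coordinate), with $2|V(G)|$ vertices and $|E(G\times K_2)|=2|E(G)|$. Since $F^2$ is normalized per vertex, $F^2_G(\beta)\le F^2_{G\times K_2}(\beta)$ is equivalent to
\[
Z^2_G(\beta)^2\ \le\ Z^2_{G\times K_2}(\beta),
\]
and once this is established, Theorem~\ref{thm:Galvin} applied to the $d$-regular bipartite graph $G\times K_2$ gives $F^2_{G\times K_2}(\beta)\le F^2_{K_{d,d}}(\beta)$, which chains with the previous inequality to finish the proof.

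For the displayed inequality I would run Zhao's bipartite swapping trick. Identify $\sigma\in[2]^{V(G)}$ with $S=\sigma^{-1}(1)$ and write $\nabla_G S\subseteq E(G)$ for the set of edges with exactly one endpoint in $S$; then $m(\sigma)=|E(G)|-|\nabla_G S|$, so with $z:=e^{\beta}>1$ and $P_G(z):=\sum_{S\subseteq V(G)}z^{|\nabla_G S|}$ we have $Z^2_G(\beta)=e^{-\beta|E(G)|}P_G(z)$; since $|E(G\times K_2)|=2|E(G)|$, the inequality above becomes
\[
P_G(z)^2\ =\ \sum_{S,T\subseteq V(G)}z^{|\nabla_G S|+|\nabla_G T|}\ \le\ \sum_{\widetilde S\subseteq V(G\times K_2)}z^{|\nabla_{G\times K_2}\widetilde S|}\ =\ P_{G\times K_2}(z),
\]
where a pair $(S,T)$ on the left records the traces of a subset of $V(G\times K_2)$ on the two layers, and each $\widetilde S$ on the right is encoded by its pair of layer-traces. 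For $W\subseteq V(G)$ let $\tau_W$ be the involution on such pairs that interchanges, for every $v\in W$, the conditions ``$v\in S$'' and ``$v\in T$'': one checks that on an edge $uv$ with exactly one endpoint in $W$ the operation $\tau_W$ swaps the power of $z$ contributed by $uv$ to the left-hand sum with the power contributed by $uv$ to the right-hand sum, while leaving all other edges' contributions unchanged. Now partition the pairs $(S,T)$ into orbits of $\{\tau_W:W\subseteq V(G)\}$: an orbit is determined by the disagreement set $D:=S\triangle T$ (on which $\{\tau_W:W\subseteq D\}$ acts freely and transitively) together with the common restriction of $S$ and $T$ to $V(G)\setminus D$. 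Choosing the base point of an orbit with $S\cap D=\emptyset$, its general element $\tau_W(S,T)=(S\cup W,\,T\setminus W)$ ($W\subseteq D$) contributes $c\cdot z^{2|\nabla_{G[D]}W|}$ to the left-hand sum and $c\cdot z^{2(|E(G[D])|-|\nabla_{G[D]}W|)}$ to the right-hand sum, with the same constant $c$ (edges meeting $V(G)\setminus D$ give equal, $W$-independent contributions to both sides). Summing over $W\subseteq D$ collapses these to $c\,P_{G[D]}(z^2)$ and $c\,z^{2|E(G[D])|}P_{G[D]}(z^{-2})$, so it suffices to prove that $P_H(z^2)\le z^{2|E(H)|}P_H(z^{-2})$ for every graph $H$ and every $z>1$.

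This last inequality is elementary: its right side is $\sum_{S\subseteq V(H)}z^{2(|E(H)|-|\nabla_H S|)}$, which weights each $S$ by the number of edges with both endpoints on one side of $(S,V(H)\setminus S)$. Writing $\chi_v\in\{\pm1\}$ with $\chi_v=+1$ iff $v\in S$, and setting $\theta:=\frac{z^2-1}{z^2+1}\in[0,1)$, one has $(z^2)^{\1[\chi_u=\chi_v]}=\frac{z^2+1}{2}(1+\theta\chi_u\chi_v)$ and $(z^2)^{\1[\chi_u\neq\chi_v]}=\frac{z^2+1}{2}(1-\theta\chi_u\chi_v)$; expanding the products over $E(H)$ and summing over $\chi\in\{\pm1\}^{V(H)}$, only the subsets $A\subseteq E(H)$ with all degrees even survive, and
\begin{align*}
z^{2|E(H)|}P_H(z^{-2})-P_H(z^2)
&=\Big(\tfrac{z^2+1}{2}\Big)^{|E(H)|}2^{|V(H)|}\sum_{\substack{A\subseteq E(H)\\ \text{all degrees even}}}\big(\theta^{|A|}-(-\theta)^{|A|}\big)\\
&=2\Big(\tfrac{z^2+1}{2}\Big)^{|E(H)|}2^{|V(H)|}\sum_{\substack{A\subseteq E(H),\ \text{all degrees even}\\ |A|\ \text{odd}}}\theta^{|A|}\ \ge\ 0,
\end{align*}
since $\theta\ge0$. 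This yields $P_G(z)^2\le P_{G\times K_2}(z)$ and hence the theorem.

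The step needing real care is the reduction $Z^2_G(\beta)^2\le Z^2_{G\times K_2}(\beta)$: one must set up the involution $\tau_W$ and the orbit bookkeeping so that every orbit collapses cleanly to evaluations of $P_{G[D]}$. This is precisely where the hypothesis $\beta>0$ is used --- equivalently $z>1$, equivalently $\theta\ge0$ --- and for $\beta<0$ the auxiliary inequality $P_H(z^2)\le z^{2|E(H)|}P_H(z^{-2})$ reverses, so a genuinely different argument would be required outside the antiferromagnetic regime.
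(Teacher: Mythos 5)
Your proof is correct. Since the paper simply cites Zhao's result and describes it as following from the ``bipartite swapping trick'' without reproducing the argument, your write-up supplies exactly that: the reduction to $Z^2_G(\beta)^2\le Z^2_{G\times K_2}(\beta)$ via the double cover (so that Galvin's Theorem~\ref{thm:Galvin} applied to $G\times K_2$ finishes), the orbit decomposition by the disagreement set $D=S\triangle T$ under the involutions $\tau_W$ for $W\subseteq D$, the reduction of each orbit to the scalar inequality $P_{G[D]}(z^2)\le z^{2|E(G[D])|}P_{G[D]}(z^{-2})$, and the high-temperature (Fourier--Ising) expansion showing that inequality holds precisely because $\theta=\tfrac{z^2-1}{z^2+1}\ge 0$, i.e.\ $\beta>0$. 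I verified the edge-by-edge bookkeeping: edges with at most one endpoint in $D$ contribute a common, $W$-independent factor to both sides, while an edge $uv\in E(G[D])$ contributes $z^{2\cdot\1[uv\in\nabla_{G[D]}W]}$ on the left and $z^{2\cdot\1[uv\notin\nabla_{G[D]}W]}$ on the right, so summing over $W\subseteq D$ indeed yields $P_{G[D]}(z^2)$ and $z^{2|E(G[D])|}P_{G[D]}(z^{-2})$. This is, as far as I can tell, the same route Zhao takes in the cited reference; you are not proving anything genuinely different from the source, but you are correctly filling in the details the present paper omits by citation, including the precise point where antiferromagnetism ($\beta>0$) is used and why the argument cannot extend to $\beta<0$.
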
 

Zhao's method does not work for $q\ge 3$, and in the ferromagnetic phase Galvin's result cannot be extended to all $G$; $K_{d,d}$ is not the maximizer. 
The clique $K_{d+1}$ has a higher free energy for any $d$ when $\beta<0$.  It is natural to conjecture that $K_{d+1}$ is in fact extremal in this case, and also that Galvin's result can be extended to triangle-free graphs.

\begin{conj}\label{conj:ferro}
For any $d$-regular $G$, any $q \ge 2$, and any $\beta<0$,
\[  U^q_G(\beta) \le U^q_{K_{d+1}}(\beta),  \]
and in particular,
\[  F^q_G(\beta) \le F^q_{K_{d+1}}(\beta). \]
Moreover, if in addition $G$ is triangle-free, then for any $\beta$
\[  U^q_G(\beta) \le U^q_{K_{d,d}}(\beta),  \]
and in particular,
\[  F^q_G(\beta) \le F^q_{K_{d,d}}(\beta). \]
\end{conj}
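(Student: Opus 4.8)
Since $F^q_G(0)=\log q$ for every $G$, integrating \eqref{eq:integrateFormula} shows that each displayed free-energy inequality follows from the corresponding internal-energy inequality (in the triangle-free statement the internal-energy comparison at $\beta>0$ should be read with the inequality reversed, in line with Theorem~\ref{thm:d3Anti}; the free-energy bound is insensitive to this). So the plan is to bound $U^q_G(\beta)$ directly, by carrying the local/occupancy method behind Theorem~\ref{thm:d3Anti} into the ferromagnetic and triangle-free regimes. Writing $U^q_G(\beta)=\tfrac12\,\E_v\,\E_\sigma[\deg_{\mathrm{mono}}(v)]$ for a uniformly random vertex $v$ and revealing $\sigma$ on $V\setminus\{v\}$, the conditional law of $\sigma(v)$ is proportional to $e^{-\beta N_j}$ where $N_j=\#\{u\sim v:\sigma_u=j\}$, so
\[
U^q_G(\beta)=\tfrac12\,\E\!\left[\frac{\sum_{j}N_j\,e^{-\beta N_j}}{\sum_{k}e^{-\beta N_k}}\right]=:\tfrac12\,\E\big[\Phi_\beta(N_1,\dots,N_q)\big],
\]
an average of a fixed function $\Phi_\beta$ of the neighbourhood colour-profile $(N_1,\dots,N_q)$, supported on $\{\textstyle\sum_j n_j=d\}$.

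\textbf{The relaxation.} One then optimises $\tfrac12\,\E_\mu[\Phi_\beta]$ — maximising for the $K_{d+1}$ bound, and for the $\beta>0$ triangle-free bound minimising — over all probability measures $\mu$ on profiles obeying the linear self-consistency constraints that every $d$-regular Potts model satisfies, namely the ones used to prove Theorem~\ref{thm:d3Anti}, supplemented as needed. For the ferromagnetic bound the target is the profile law of $K_{d+1}$: because $\Phi_\beta$ with $\beta<0$ rewards concentrated profiles, the extremal $\mu$ should sit near the profile in which all $d$ neighbours share a single colour, and equality should force $G$ to be a disjoint union of $K_{d+1}$'s. For the triangle-free bound one runs the same scheme with $K_{d,d}$ as target, the point being that triangle-freeness ought to be exactly what removes the clique-type profiles from the feasible region, dropping the optimum from the $K_{d+1}$ value to the $K_{d,d}$ value. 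For $q=2$ an alternative worth trying is a clique analogue of Galvin's entropy bound (Theorem~\ref{thm:Galvin}) together with a swapping/symmetrisation step in the spirit of Zhao's bipartite swapping trick, adapted to the non-bipartite extremiser and to $\beta<0$.

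\textbf{The main obstacle.} The crux — and the reason even the Ising case $q=2$ of the first inequality is open — is that the extremality of $K_{d+1}$, and among triangle-free graphs of $K_{d,d}$, is a genuinely global, clique- and girth-driven effect that single-vertex self-consistency constraints cannot see: in $K_{d+1}$ the near-monochromatic profile at each vertex is held in place by the fact that that vertex's neighbourhood is itself a clique, a second-neighbourhood correlation invisible to the first-order relaxation. A low-temperature expansion makes this concrete: flipping an entire triangle off the constant colouring costs only $d$ bichromatic edges, the same as flipping a single vertex, so triangle-rich graphs acquire extra cheap excitations and hence larger $U$ for $\beta<0$. One should therefore expect the first-order occupancy LP to lose tightness once $\beta<0$ — indeed the proof of Theorem~\ref{thm:d3Anti} does not reach $\beta<0$ even at $d=3$ — so the real work is to find either a tractable family of second-neighbourhood linear constraints that pins the $\beta<0$ optimum to $K_{d+1}$ (and, under triangle-freeness, excludes clique profiles so the optimum drops to $K_{d,d}$), or a genuinely different argument. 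Failing that, one can at least try to settle the temperature extremes — a cluster/Pirogov--Sinai expansion comparing excitation multiplicities as $\beta\to-\infty$, and a Taylor expansion near $\beta=0$ reducing to an inequality between subgraph counts — and then interpolate.
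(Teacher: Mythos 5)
The statement you are addressing is Conjecture~\ref{conj:ferro}: the paper offers no proof of it (it is stated as open, and the surrounding text only observes that $K_{d+1}$ beats $K_{d,d}$ for $\beta<0$, so Galvin-type bounds cannot extend verbatim). Your submission is likewise not a proof but a research plan, and you say so yourself: the entire argument funnels into the ``relaxation'' step, where you would need either to solve the local-view LP for $\beta<0$ with value $U^q_{K_{d+1}}(\beta)$ (e.g.\ by exhibiting a dual-feasible point as in the proof of Theorem~\ref{thm:d3Anti}), or to produce second-neighbourhood constraints that make it tight, and neither is done. Your own ``main obstacle'' paragraph explains why the first-order constraints are expected to be insufficient at $\beta<0$, so the proposal concedes the decisive gap rather than closing it: no dual certificate, no constraint family, and no argument that triangle-freeness lowers the optimum of any relaxation from the $K_{d+1}$ value to the $K_{d,d}$ value (note that the local views already record edges inside $N(v)$, so triangle-freeness restricts the support, but nothing is proved about the resulting optimum). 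The fallback suggestions (cluster expansion as $\beta\to-\infty$, Taylor expansion near $\beta=0$, ``then interpolate'') are not carried out, and interpolation between two asymptotic regimes is not a proof mechanism.

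Two smaller points. First, your heuristic that flipping a triangle costs $d$ bichromatic edges, ``the same as flipping a single vertex,'' holds only for $d=3$: a flipped triangle creates $3(d-2)$ bichromatic edges, versus $d$ for a single vertex, so the excitation-counting picture needs restating for general $d$ (the qualitative conclusion that triangles create cheap excitations survives, since $3(d-2)<2d$ for $d<6$, but the claim as written is wrong for $d\ge4$). Second, your reduction remark that ``the free-energy bound is insensitive to'' the reversed inequality in the triangle-free case glosses over a real dependency: for $\beta>0$ the bound $F^q_G(\beta)\le F^q_{K_{d,d}}(\beta)$ requires $U^q_G\ge U^q_{K_{d,d}}$ on $(0,\beta)$, which is exactly Conjecture~\ref{conj:internal} --- known only for $d\le 3$ (Theorem~\ref{thm:d3Anti}) or for bipartite $G$ (Theorem~\ref{thm:Galvin}) --- so even that half of the ``in particular'' statement rests on another open conjecture for $d\ge4$.
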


The main contribution of this paper is to prove Theorem~\ref{thm:d3Anti}. We do so by considering the following experiment.  Fix $q$, $\beta$ and a $d$-regular graph $G$.  Choose a vertex $v$ uniformly from $V(G)$ and independently sample a coloring $\sigma \in [q]^{V(G)}$ from the Potts model.  Now for each neighbor $u$ of $v$, record the number of its `external' neighbors (neighbors outside $v \cup N(v)$) receiving each color; record also any edges within $N(v)$ (see Figure~\ref{fig:lvs} for examples). This gives a \emph{local view} of $\sigma$ from $v$.  Note that although we have sampled a coloring $\sigma$ of the whole graph, the colors of $v$ and its neighbors do not form part of the local view.  In fact it is best to think of these colors as having not been revealed.  An important part of the method is that, conditioned on the local view, the distribution of colorings of $v$ and its neighbors can be determined.  For fixed $d$ and $q$ there are only a finite number of possible local views; call this set of local views $\mathcal L_{d,q}$.  Each $d$-regular graph $G$ and inverse temperature $\beta$ induces a probability distribution on $\mathcal L_{d,q}$.

Not all probability distributions on $\mathcal L_{d,q}$ can arise from a graph; there are certain consistency conditions that must hold.  For example, the expected number of monochromatic edges incident to $v$ must equal the expected number of monochromatic edges incident to a uniformly chosen neighbor of $v$. 
Moreover, we can compute both of these expectations given a probability distribution on $\mathcal L_{d,q}$; in fact they are both linear functions of the probabilities.  
For $d=3$ this constraint is sufficient, but for larger $d$ more are required. 
Another family of consistency conditions are that for every multiset $S$ of size $d$ from $q$ colors, the probability $N(v)$ is colored by $S$ must be the same as the probability $N(u)$ is colored by $S$ for a uniformly chosen neighbor $u$ of $v$.  
Finally, the quantity we wish to optimize, $U_G^q(\beta)$ is also a linear function of the probabilities in the distribution on $\mathcal L_{d,q}$.

So instead of maximizing or minimizing $U_G^q(\beta)$ over all $d$-regular graphs, we relax the problem and instead optimize over all probability distributions on $\mathcal L_{d,q}$ that satisfy the above consistency conditions. This is simply a linear program over $|\mathcal L_{d,q}|$ variables.  For some values of $d$, $q$ and $\beta$ we know this linear program is not tight although we conjecture it to be tight whenever $q \geq d+1 \geq 3$ and $\beta >0$.

 This method builds on previous work on independent sets and matchings~\cite{davies2015independent,davies2016average,perarnau2016counting} and the Widom-Rowlinson model~\cite{cohen2017widom}, but here we generalize the previous approach in two ways: 1) we deal with $q$-spin models instead of $2$-spin models; 2) we deal with soft and hard constraints instead of just hard constraints. The family of linear programs in~\cite{davies2015independent} for matchings was an infinite family of LP's indexed by two parameters - the vertex degree $d$ and a fugacity parameter $\lam >0$ - and the entire family could be solved analytically with a single proof via LP duality.   Here the situation is worse: we have an infinite family of LP's indexed by $d$, $q$, $\beta$. Moreover, while the number of constraints for the matching LP grew linearly in $d$, here the number of constraints needed can grow like the integer partition number of $d$. In this paper we solve the program for $d=3$ where there are 35 variables.

In Section~\ref{sec:proof} we solve the LP (both the maximization and minimization problem) for $d=3$, $q\ge 2$, and all $\beta>0$, and we solve it in a somewhat mechanical way that does not reveal much about generalizations to higher $d$. 

Nevertheless, we suspect that the LP is tight for a much wider set of parameters, including for  $q \ge d+1$ and $\beta >0$.  For some other parameter values the constraints described above are not enough to solve the internal energy minimization problem for all $d,q$, $\beta>0$.  It is easy to find values of $\beta$ so that if $d \ge 4$ and $q\le d$, the minimizer of the LP is smaller than $U_{K_{d,d}}^q(\beta)$.  Two challenges for future work are:
\begin{enumerate}[label=(\textup{\roman*})]
\item Solve the infinite family of LP's with $d\ge 4, q \ge d+1$, $\beta>0$ that we conjecture is tight. 
\item Find additional consistency conditions (constraints) that can further tighten the LP for $q \le d$.
\end{enumerate}
It is worth comparing this method to the entropy method used in~\cite{galvin2006bounding,galvin2004weighted,kahn2001entropy,zhao2011bipartite}.  The entropy method has the great virtue of generality - the theorems encompass many models all at once.  The method we employ here has the virtue that the results are stronger---we show optimality on the level of the internal energy, which is both a physical observable and the derivative of the free energy---and that it can be used to prove tight bounds in a wide variety of situations, albeit with much model-specific work required. 

In another direction, extremal bounds in the Potts model on graphs of maximum degree $d$ are given by Sokal~\cite{sokal2001bounds}, proving that all zeros of the chromatic polynomial, $P_G(q)$, have modulus bounded by $7.963907 d$. The complete graph $K_{d+1}$ shows that a linear bound in $d$ is best possible. Given our results here, it would be interesting to find the sharp constant in the upper bound; that is, over all $d$-regular graphs $G$ what is the supremum of $R(G)$ where 
\begin{align*}
R(G) &= \max \{ |r|: r \in \mathbb C, P_G(r) =0 \} .
\end{align*}
In particular, for $d\ge 4$, do we have $R(G) \le R(K_{d,d})$? And for $d=3$ do we have $R(G) \le R(K_{4})=3$? For more, see the discussion in Section $9$ of~\cite{sokal2005multivariate}.  

\section{Maximizing the number of \texorpdfstring{$q$}{q}-colorings of \texorpdfstring{$d$}{d}-regular graphs}\label{sec:colorings}

If we take $\beta \to \infty$ in the Potts model, we bias more and more against monochromatic edges, and thus if a \emph{proper} $q$-coloring of $G$ exists, the `zero-temperature' anti-ferromagnetic Potts model is simply the uniform distribution over proper $q$-colorings of $G$. The limit of the  partition function $\lim_{\beta \to \infty} Z_G^q(\beta)= C_q(G)$, the number of proper $q$-colorings of $G$.  Maximizing $C_q(G)$ over different families of graphs  has been the study of much work in extremal combinatorics.  Linial~\cite{linial1983legal} asked which graph on $n$ vertices with $m$ edges maximizes $C_q(G)$.  After a series of bounds by Labeznik and coauthors~\cite{lazebnik1989greatest,lazebnik1990new,lazebnik2007maximum}, Loh, Pikhurko, and Sudakov~\cite{loh2010maximizing} gave a complete answer to this question for a wide range of parameters $q$, $n$, $m$, using the Szemer\'edi's Regularity Lemma to reduce the maximization problem over graphs to a quadratic program in $2^q-1$ variables. 

A similar question in a very different setting is to ask which $d$-regular, $n$-vertex graph maximizes the number of $q$-colorings; or, given that $C_q$ is multiplicative when taking disjoint unions of graphs, which $d$-regular graph maximizes $\frac{1}{|V(G)|} \log C_q(G)$? Although neither question specifies the sparsity of the graph, one can think of Linial's question as a question about dense graphs and this question as one about sparse graphs (and the techniques of~\cite{loh2010maximizing} and this paper reflect this: the Regularity Lemma primarily concerns dense graphs, while statistical mechanics is primarily concerned with sparse, regular graphs).

  For regular graphs,  Galvin and Tetali~\cite{galvin2004weighted} conjectured that $K_{d,d}$ maximizes the normalized number of $q$-colorings over all $d$-regular graphs.
\begin{conj}[Galvin--Tetali~\cite{galvin2004weighted}]
\label{conj:colorings}
For any $q \ge 2$, $d \ge1$, and all $d$-regular graphs $G$,
\begin{equation}
\label{eq:ColorMax}
  C_q(G)^{1/|V(G)|} \le C_q(K_{d,d})^{1/(2d)}.
  \end{equation}
\end{conj}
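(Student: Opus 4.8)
The plan is to derive Conjecture~\ref{conj:colorings} in the case $d = 3$ as the zero-temperature limit of Theorem~\ref{thm:d3Anti}. The cases $d \ge 4$ remain out of reach: they would require solving the linear program of Section~\ref{sec:proof} in the regime $q \ge d+1$, $\beta > 0$ predicted by Conjecture~\ref{conj:internal}, which we do not attempt. So essentially all of the work is Theorem~\ref{thm:d3Anti} itself, and what follows is a short limiting argument.

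First I would record that the partition function recovers the number of proper colorings at zero temperature. Writing $Z_G^q(\beta) = \sum_{k \ge 0} a_k(G)\, e^{-\beta k}$, where $a_k(G)$ is the number of $\sigma \in [q]^{V(G)}$ with exactly $k$ monochromatic edges, we have $a_0(G) = C_q(G)$, so $Z_G^q(\beta) \to C_q(G)$ as $\beta \to \infty$. Since $K_{3,3}$ is bipartite, $C_q(K_{3,3}) \ge 2$ for every $q \ge 2$, whence $F^q_{K_{3,3}}(\beta) \to \tfrac16 \log C_q(K_{3,3})$. Now fix a cubic graph $G$ on $n$ vertices. If $C_q(G) = 0$ then \eqref{eq:ColorMax} holds trivially, its right-hand side being positive; otherwise $C_q(G) > 0$ and likewise $F^q_G(\beta) \to \tfrac1n \log C_q(G)$. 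Using the bound $F^q_G(\beta) \le F^q_{K_{3,3}}(\beta)$ from Theorem~\ref{thm:d3Anti} for every $\beta > 0$, letting $\beta \to \infty$, and exponentiating yields $C_q(G)^{1/n} \le C_q(K_{3,3})^{1/6}$, which is \eqref{eq:ColorMax} with $d = 3$.

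For the characterization of the extremal graphs I would use monotonicity of the free-energy gap rather than take a limit inside the equality case of Theorem~\ref{thm:d3Anti}. Put $g(\beta) := F^q_{K_{3,3}}(\beta) - F^q_G(\beta)$; by \eqref{eq:integrateFormula}, $g(\beta) = \int_0^\beta (U^q_G(t) - U^q_{K_{3,3}}(t))\,dt$ with non-negative integrand by Theorem~\ref{thm:d3Anti}, so $g$ is non-decreasing and increases to $g(\infty) = \tfrac16\log C_q(K_{3,3}) - \tfrac1n\log C_q(G)$. If equality holds in \eqref{eq:ColorMax} for $d = 3$, then $C_q(G) > 0$ and $g(\infty) = 0$, which forces the integrand $U^q_G - U^q_{K_{3,3}}$ to vanish identically on $[0,\infty)$; the equality case of the internal-energy lower bound in Theorem~\ref{thm:d3Anti} then gives that $G$ is a union of $K_{3,3}$'s, and the converse is immediate from multiplicativity of $C_q$ over disjoint unions. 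The only things to watch in this deduction are the trivial case $C_q(G) = 0$ and the harmless interchange of $\log$ with the limit $\beta \to \infty$; neither is a genuine obstacle — the difficulty is entirely concentrated in Theorem~\ref{thm:d3Anti}.
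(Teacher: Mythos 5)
Your argument is essentially the same as the paper's: \eqref{eq:ColorMax} for $d=3$ follows by taking the zero-temperature limit $\beta\to\infty$ in the free-energy inequality of Theorem~\ref{thm:d3Anti}, which is exactly what Lemma~\ref{lem:betatoColorings} and Corollary~\ref{cor:K33coloring} do, and you are right that the cases $d\ge4$ are not proved here. Your monotonicity-of-the-gap argument for the equality case is a correct (and slightly more explicit) way to derive the ``if and only if $G$ is a union of $K_{3,3}$'s'' clause of Corollary~\ref{cor:K33coloring}, which the paper asserts but leaves to the reader.
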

In the same paper they prove that \eqref{eq:ColorMax} holds for all $d$-regular, \emph{bipartite} $G$. In the language of graph homomorphisms, $C_q(G)$ counts the number of homomorphisms from $G$ into $K_q$, and their results holds for the number of homomorphisms of a $d$-regular bipartite $G$ in to \emph{any} target graph $H$.

Before this work, Conjecture~\ref{conj:colorings} was not known for any pair $(q,d)$ apart from the trivial cases $d=1$, $d=2$, and $q=2$ (see Section~\ref{sec:d2q2}). However, significant partial progress was made in addition to the bipartite case.  Employing the bipartite swapping trick, Zhao~\cite{zhao2011bipartite} showed that for $q \ge (2n)^{2n-2} $, the bipartiteness restriction could be removed for graphs on $n$ vertices.  Galvin~\cite{galvin2013maximizing} then reduced the lower bound on $q$, showing that $q > 2 \binom{nd/2}{4} $ suffices.  Dependence on $n$ in the number of colors is of course not ideal, as it does not prove Conjecture~\ref{conj:colorings} for any pair $(q,d)$, but it does restrict the class of possible counterexamples.   In another direction of partial progress on Conjecture~\ref{conj:colorings}, Galvin~\cite{galvin2015counting} gave an upper bound on $C_q(G)$ for all $d$-regular $G$, that is tight, asymptotically in $d$, on a logarithmic scale. 

The following lemma relates bounds on internal energy per particle to bounds on the number of $q$-colorings. 

\begin{lemma}
\label{lem:betatoColorings}
Fix $d,q$.  If for all $d$-regular $G$, and all $\beta >0$, $U^q_G( \beta) \ge U^q_{K_{d,d}}(\beta)$, then \eqref{eq:ColorMax} holds.
\end{lemma}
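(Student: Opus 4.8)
The plan is to push the pointwise hypothesis on the internal energy through the integral formula~\eqref{eq:integrateFormula} to get a bound on the free energy at every finite $\beta$, and then to pass to the zero-temperature limit $\beta \to \infty$.

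First I would integrate. Assuming $U^q_G(t) \ge U^q_{K_{d,d}}(t)$ for all $t>0$, the formula~\eqref{eq:integrateFormula} gives
\[
F^q_G(\beta) = \log q - \int_0^\beta U^q_G(t)\,dt \;\le\; \log q - \int_0^\beta U^q_{K_{d,d}}(t)\,dt = F^q_{K_{d,d}}(\beta)
\]
for every $\beta>0$; equivalently $\tfrac{1}{|V(G)|}\log Z^q_G(\beta) \le \tfrac{1}{2d}\log Z^q_{K_{d,d}}(\beta)$, since $|V(K_{d,d})| = 2d$.

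Next I would take $\beta \to \infty$. Because $Z^q_G(\beta) = \sum_{\sigma} e^{-\beta m(\sigma)}$ and each $m(\sigma)$ is a nonnegative integer, every summand with $m(\sigma)\ge 1$ tends to $0$ while each of the $C_q(G)$ proper colorings contributes exactly $1$, so $Z^q_G(\beta) \to C_q(G)$, and likewise $Z^q_{K_{d,d}}(\beta) \to C_q(K_{d,d})$. Since $K_{d,d}$ is bipartite and $q \ge 2$ we have $C_q(K_{d,d}) \ge 2 > 0$, so the right-hand side converges to the finite value $\tfrac{1}{2d}\log C_q(K_{d,d})$; by continuity of $\log$ on $(0,\infty)$ the inequality passes to the limit, giving $\tfrac{1}{|V(G)|}\log C_q(G) \le \tfrac{1}{2d}\log C_q(K_{d,d})$, and exponentiating yields~\eqref{eq:ColorMax}. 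In the degenerate case $C_q(G)=0$ the inequality~\eqref{eq:ColorMax} is trivial, so that case can be disposed of separately.

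There is essentially no obstacle here: the argument is just monotonicity of the integral followed by the limit $Z^q_G(\beta)\to C_q(G)$. The only points deserving a sentence of care are the interchange of the limit $\beta\to\infty$ with the logarithm (handled by continuity together with strict positivity of $C_q(K_{d,d})$) and the separate treatment of $C_q(G)=0$.
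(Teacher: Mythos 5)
Your argument is correct and is essentially the paper's proof: both handle the degenerate case $C_q(G)=0$ separately, then integrate the hypothesis on internal energy via~\eqref{eq:integrateFormula} and pass to the limit $\beta\to\infty$, using $Z^q_G(\beta)\to C_q(G)$. The only cosmetic difference is that you compare free energies at each finite $\beta$ before taking the limit, whereas the paper writes the limit as $\int_0^\infty$ directly and compares the improper integrals; the content is identical, and your extra care about continuity of $\log$ and strict positivity of $C_q(K_{d,d})$ is a harmless (and correct) elaboration.
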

\begin{proof}
Let $G$ be any $d$-regular graph. If $C_q(G)=0$ then \eqref{eq:ColorMax} clearly holds. Otherwise, we take logarithms and write
\begin{align*}
\frac{1}{|V(G)|} \log C_q(G) &= \lim_{\beta \to +\infty} \frac{1}{|V(G)|} \log Z^q_G(\beta) \\
 &=  \log q -  \int_{0}^\infty U^q_G(\beta) \, d\beta \\
 &\le \log q - \int_{0}^\infty U^q_{K_{d,d}}(\beta) \, d\beta \\
 &= \frac{1}{2d} \log C_q(K_{d,d})\,.\qedhere
\end{align*}

\end{proof}

As a corollary of Theorem~\ref{thm:d3Anti} and Lemma~\ref{lem:betatoColorings}, we prove Conjecture~\ref{conj:colorings} for $d=3 $ and all $q$. 
\begin{cor}\label{cor:K33coloring}
For any $3$-regular graph $G$, and any $q\ge 2$,
\[ C_q(G)^{1/|V(G)|} \le C_q(K_{3,3})^{1/6}\,,\]
with equality if and only if $G$ is a union of $K_{3,3}$'s. 
\end{cor}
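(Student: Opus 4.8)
The plan is to obtain Corollary~\ref{cor:K33coloring} as an immediate consequence of Theorem~\ref{thm:d3Anti} and Lemma~\ref{lem:betatoColorings}. Theorem~\ref{thm:d3Anti} supplies exactly the hypothesis of Lemma~\ref{lem:betatoColorings} in the case $d=3$: for every cubic $G$, every $q\ge 2$, and every $\beta>0$ we have $U^q_G(\beta)\ge U^q_{K_{3,3}}(\beta)$. Feeding this into Lemma~\ref{lem:betatoColorings} yields the bound $C_q(G)^{1/|V(G)|}\le C_q(K_{3,3})^{1/6}$ with no further argument, so the content of the corollary beyond Theorem~\ref{thm:d3Anti} is entirely in the equality characterization.

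For the rigidity statement, one direction is trivial: if $G$ is a vertex-disjoint union of copies of $K_{3,3}$ then $C_q$ is multiplicative over connected components, giving $C_q(G)^{1/|V(G)|}=C_q(K_{3,3})^{1/6}$. For the converse, suppose equality holds. If $C_q(G)=0$ then the left-hand side is $0$, which is strictly smaller than $C_q(K_{3,3})^{1/6}\ge 2^{1/6}>0$ since $K_{3,3}$ is bipartite and hence properly $q$-colorable for every $q\ge 2$; so we may assume $C_q(G)>0$, in which case $\log C_q(G)$ is finite and the chain of (in)equalities in the proof of Lemma~\ref{lem:betatoColorings} must collapse to equalities throughout. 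In particular $\int_0^\infty U^q_G(\beta)\,d\beta=\int_0^\infty U^q_{K_{3,3}}(\beta)\,d\beta$, both integrals being finite because $C_q(G)>0$ forces $Z^q_G(\beta)$ to stay bounded below by $C_q(G)\ge 1$ while the numerator of $U^q_G(\beta)$ decays like $e^{-\beta}$. Now $U^q_G$ and $U^q_{K_{3,3}}$ are continuous (indeed real-analytic, as ratios of positive finite sums of exponentials), and $U^q_G(\beta)-U^q_{K_{3,3}}(\beta)\ge 0$ for all $\beta>0$ by Theorem~\ref{thm:d3Anti}; a nonnegative continuous function whose integral over $(0,\infty)$ vanishes is identically zero, so $U^q_G(\beta)=U^q_{K_{3,3}}(\beta)$ for every $\beta>0$. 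The equality clause of Theorem~\ref{thm:d3Anti} then forces $G$ to be a union of $K_{3,3}$'s: it cannot be a union of $K_4$'s, since then $U^q_G(\beta)=U^q_{K_4}(\beta)>U^q_{K_{3,3}}(\beta)$ for $\beta>0$.

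I do not expect any real obstacle here, since the difficult work is precisely Theorem~\ref{thm:d3Anti}; the corollary is a soft deduction. The only places that call for a word of care are the convergence of $\int_0^\infty U^q_G$ when $C_q(G)>0$ and the passage from equality of integrals to pointwise equality of the internal energies. Both are routine: the first is the crude estimate above, and the second is the standard fact that a continuous nonnegative integrand with zero integral vanishes everywhere, after which Theorem~\ref{thm:d3Anti} hands us the rigidity conclusion directly.
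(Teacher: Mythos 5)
Your argument is correct and is essentially the paper's own: the inequality is the immediate combination of Theorem~\ref{thm:d3Anti} with Lemma~\ref{lem:betatoColorings}, and your rigidity argument (convergence of $\int_0^\infty U^q_G$, the nonnegative continuous integrand with zero integral, and then the equality clause of Theorem~\ref{thm:d3Anti}) correctly supplies the equality characterization that the paper leaves implicit. The only thing the paper itself adds is nothing at all — it states the corollary without a separate proof — so your write-up is actually a welcome fleshing-out of the uniqueness claim rather than a different route.
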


We remark that in a similar fashion, Theorem~\ref{thm:d3Anti} gives that $ C_q(G)^{1/|V(G)|} \ge C_q(K_{4})^{1/4}$ for all cubic graphs $G$, but this result was recently proved for all $d$ by Csikv{\'a}ri (see \cite{yufeiSurvey}).
\begin{theorem}[Csikv{\'a}ri]
\label{thm:colMinPC}
For all $d$, all $q\ge 2$, and all $d$-regular $G$,
\[ C_q(G)^{1/|V(G)|} \ge C_q(K_{d+1})^{1/(d+1)}.\]
\end{theorem}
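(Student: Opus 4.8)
The plan is to bypass the occupancy/linear-programming machinery entirely and instead prove the bound directly by a short \emph{random greedy colouring} argument, which works for every $d$ at once. First observe that if $q \le d$ then $C_q(K_{d+1}) = 0$ (a clique on $d+1$ vertices cannot be $q$-coloured), so the inequality is trivial; hence we may assume $q \ge d+1$.

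The starting point is a deterministic lower bound on $C_q(G)$ coming from an arbitrary vertex ordering. Fix an ordering $v_1,\dots,v_n$ of $V(G)$ and colour the vertices greedily in this order: when we reach $v_i$ it has some number $d_i^-$ of already-coloured neighbours (its back-degree in the ordering), these use at most $d_i^-$ distinct colours, and so $v_i$ has at least $q - d_i^-$ permissible colours. Since $q \ge d+1 > d \ge d_i^-$ this is always at least $1$, so running the process gives $C_q(G) \ge \prod_{i=1}^n (q - d_i^-)$, that is, $\log C_q(G) \ge \sum_{i} \log(q - d_i^-)$ for \emph{every} ordering.

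Next I would average over a uniformly random ordering $\pi$ of $V(G)$: taking expectations in the previous inequality, $\log C_q(G) \ge \E_\pi\big[\sum_{v \in V(G)} \log(q - d_v^-(\pi))\big] = \sum_{v} \E_\pi\big[\log(q - d_v^-(\pi))\big]$. The crux is the distribution of the back-degree of a fixed vertex $v$: under a uniform $\pi$, the relative order of $v$ together with its $d$ neighbours is a uniform permutation of these $d+1$ vertices, and $d_v^-(\pi)$ is exactly the rank of $v$ among them minus $1$, hence $d_v^-(\pi)$ is \emph{uniform on $\{0,1,\dots,d\}$}. Consequently $\E_\pi[\log(q - d_v^-(\pi))] = \frac{1}{d+1}\sum_{j=0}^{d}\log(q - j) = \frac{1}{d+1}\log C_q(K_{d+1})$, using $C_q(K_{d+1}) = q(q-1)\cdots(q-d)$. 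Summing over the $n$ vertices gives $\log C_q(G) \ge \frac{n}{d+1}\log C_q(K_{d+1})$, which rearranges to the claimed inequality.

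I do not expect a genuine obstacle here — the argument is only a few lines — but the point worth flagging is the averaging step: a single cleverly chosen ordering does not suffice (for instance, an ordering minimising the maximum back-degree still need not beat $K_{d+1}$), whereas for $G = K_{d+1}$ itself every ordering realises the back-degree profile $(0,1,\dots,d)$ exactly, making the bound tight there; averaging over orderings is precisely what extracts this profile from an arbitrary $d$-regular graph. (One could alternatively try to prove the stronger internal-energy inequality $U^q_G(\beta) \le U^q_{K_{d+1}}(\beta)$ and integrate as in Lemma~\ref{lem:betatoColorings}, but that is exactly the upper half of Conjecture~\ref{conj:internal}, open for $d \ge 4$, so the greedy route is far more efficient for this particular corollary.)
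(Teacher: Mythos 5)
The paper does not supply a proof of this statement --- it is stated as Csikv\'ari's theorem with a pointer to the survey reference --- so there is no in-paper argument to compare against. Your proof is correct and self-contained, and it is in fact the standard argument (random greedy colouring over a uniformly random vertex ordering) that Csikv\'ari uses; the two ideas that carry it are exactly the ones you isolate: (i) for any ordering, $C_q(G) \ge \prod_i (q-d^-_{v_i})$, since at each step the already-coloured neighbours block at most $d^-_{v_i}$ colours and each completed run produces a distinct proper colouring (a bijection, in fact, between proper colourings and valid choice sequences); and (ii) for a fixed vertex $v$ in a $d$-regular simple graph, the back-degree under a uniform random ordering is uniform on $\{0,\dots,d\}$, because only the relative order of $v$ and its $d$ distinct neighbours matters. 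Averaging $\sum_v \log(q-d^-_v)$ then yields $\frac{n}{d+1}\log\bigl(q(q-1)\cdots(q-d)\bigr) = \frac{n}{d+1}\log C_q(K_{d+1})$, and the trivial case $q\le d$ is handled up front. Your closing remark is also apt: integrating an internal-energy bound as in Lemma~\ref{lem:betatoColorings} would require the upper half of Conjecture~\ref{conj:internal}, which is open for $d\ge4$, so the greedy averaging argument is genuinely the efficient route to this inequality for all $d$ at once.
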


Csikv{\'a}ri and Lin~\cite{csikvari2016sidorenko} also proved that for any $d$-regular, bipartite $G$, \[ C_q(G)^{1/|V(G)|} \ge  q \left( \frac{q-1}{q}  \right)^{d/2}  \, , \]
a result that, for $q$ large enough as function of $d$,  is tight asymptotically for a sequence of bipartite graphs of diverging girth. 

\section{Proof of Theorem~\ref{thm:d3Anti}}\label{sec:proof}

In this section we prove Theorem~\ref{thm:d3Anti} by formulating and solving the LP described in the introduction. 
For brevity we drop the superscripts in notation for partition functions and internal energy of a graph $G$, writing $Z_G$ and $U_G$ for these quantities.

Recall the experiment which defines the local view: draw a coloring $\sigma\in[q]^{V(G)}$ according to the $q$-color Potts model with inverse temperature $\beta$ and independently, uniformly at random choose a vertex $v\in V(G)$. 
The local view consists of the induced subgraph of $G$ on $v \cup N(v)$, together with, for each $u\in N(v)$, the multiset of colors that appears in $N(u) \setminus (\{v\} \cup N(v))$. 
Four examples are pictured in Figure~\ref{fig:lvs}.
As noted in the introduction, our calculations depend only on the \emph{number} of `external neighbors' of vertices $u\in N(v)$ which receive each color, and not the graph structure between these external neighbors. 
For clarity we draw the vertices themselves.  Let $\mathcal C_q$ denote all possible local views for the $q$-color Potts model on cubic graphs.
 
As $q$ grows larger the number of possible local views grows like $q^{d(d-1)}$. However, if we consider equivalence classes of the local views under permutations of the colors (as detailed below), the number of possible local views is bounded in terms of $d$.  This makes the LP finite for any fixed $d$. For a complete list of local views (up to equivalence) for $d=3$ see Appendix~\ref{sec:lvlist}. 

\begin{figure}[tb]
\begin{subfigure}[b]{0.4\textwidth}\centering
\begin{tikzpicture}[rotate=90,xscale=-0.8,yscale=0.8]
	\node (v) at (0:0) {};
	\node (u1) at (60:2) {};
	\node (u2) at (0:1) {};
	\node (u3) at (-60:2) {};
	
	\draw (v) -- (u1);
	\draw (v) -- (u2);
	\draw (v) -- (u3);
	
	\node[draw=color1] (u11) at ($ (u1) + (30:1) $) {1};
	\node[draw=color1] (u12) at ($ (u1) + (-30:1) $) {1};
	\node[draw=color1] (u21) at ($ (u2) + (30:1) $) {1};
	\node[draw=color1] (u22) at ($ (u2) + (-30:1) $) {1};
	\node[draw=color1] (u31) at ($ (u3) + (30:1) $) {1};
	\node[draw=color1] (u32) at ($ (u3) + (-30:1) $) {1};
	
	\draw (u1) -- (u11);
	\draw (u1) -- (u12);
	\draw (u2) -- (u21);
	\draw (u2) -- (u22);
	\draw (u3) -- (u31);
	\draw (u3) -- (u32);
\end{tikzpicture}
\subcaption{Local view $C_1$}\label{fig:lv2}
\end{subfigure}
\begin{subfigure}[b]{0.4\textwidth}\centering
\begin{tikzpicture}[rotate=90,xscale=-0.8,yscale=0.8]
	\node (v) at (0:0) {};
	\node (u1) at (60:2) {};
	\node (u2) at (0:1) {};
	\node (u3) at (-60:2) {};
	
	\draw (v) -- (u1);
	\draw (v) -- (u2);
	\draw (v) -- (u3);
	
	\node[draw=color1] (u11) at ($ (u1) + (30:1) $) {1};
	\node[draw=color2] (u12) at ($ (u1) + (-30:1) $) {2};
	\node[draw=color1] (u21) at ($ (u2) + (30:1) $) {1};
	\node[draw=color2] (u22) at ($ (u2) + (-30:1) $) {2};
	\node[draw=color1] (u31) at ($ (u3) + (30:1) $) {1};
	\node[draw=color2] (u32) at ($ (u3) + (-30:1) $) {2};
	
	\draw (u1) -- (u11);
	\draw (u1) -- (u12);
	\draw (u2) -- (u21);
	\draw (u2) -- (u22);
	\draw (u3) -- (u31);
	\draw (u3) -- (u32);
\end{tikzpicture}
\subcaption{Local view $C_2$}\label{fig:lv3}
\end{subfigure}
\\[\baselineskip]
\begin{subfigure}[b]{0.4\textwidth}\centering
\begin{tikzpicture}[rotate=90,xscale=-0.8,yscale=0.8]
	\node (v) at (0:0) {};
	\node (u1) at (45:1.4142) {};
	\node (u2) at (0:1) {};
	\node (u3) at (1,-1.4142) {};
	
	\draw (v) -- (u1);
	\draw (v) -- (u2);
	\draw (v) -- (u3);
	\draw (u1) -- (u2);
	
	\node[draw=color1] (u11) at ($ (u1) + (0:0.8660) $) {1};
	\node[draw=color2] (u21) at ($ (u2) + (0:0.8660) $) {2};
	\node[draw=color3] (u31) at ($ (u3) + (30:1) $) {3};
	\node[draw=color4] (u32) at ($ (u3) + (-30:1) $) {4};
	
	\draw (u1) -- (u11);
	\draw (u2) -- (u21);
	\draw (u3) -- (u31);
	\draw (u3) -- (u32);
\end{tikzpicture}
\subcaption{Local view with $1$ triangle}
\end{subfigure}
\begin{subfigure}[b]{0.4\textwidth}\centering
\begin{tikzpicture}[rotate=90,xscale=-0.8,yscale=0.8]
	\node (v) at (0:0) {};
	\node (u1) at (30:1.73205) {};
	\node (u2) at (0:1) {};
	\node (u3) at (-30:1.73205) {};
	
	\draw (v) -- (u1);
	\draw (v) -- (u2);
	\draw (v) -- (u3);
	\draw (u1) -- (u2);
	\draw (u2) -- (u3);
	
	\node[draw=color1] (u11) at ($ (u1) + (0:0.8660) $) {1};
	\node[draw=color2] (u31) at ($ (u3) + (0:0.8660) $) {2};
	
	\draw (u1) -- (u11);
	\draw (u3) -- (u31);
\end{tikzpicture}
\subcaption{Local view with $2$ triangles}
\end{subfigure}
\caption{\label{fig:lvs}Example local views. 
Figures \subref{fig:lv2} and~\subref{fig:lv3} show, up to permutations of the colors, the only local views which can arise in $K_{3,3}$.  The colored, numbered vertices are representations of the multiset of colors that appear in the boundary $N(u) \setminus (\{v\} \cup N(v))$ for $u\in N(v)$.}
\end{figure}
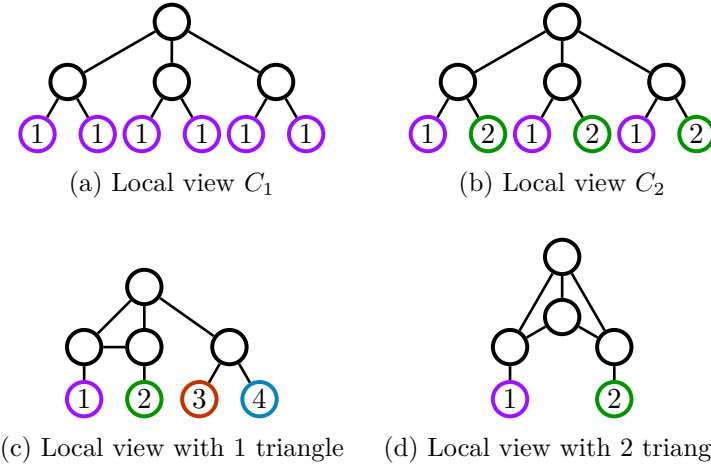

Suppose that the local view $C$ arises from selecting the coloring $\sigma$ and the vertex $v$. 
We refer to the colored `external neighbors' at distance two from $v$ as the \emph{boundary}, and write $V_C$ for the set of uncolored vertices in $C$, so that the set of $q$-colorings of these vertices is $[q]^{V_C}$. 
The coloring $\sigma$ induces a \emph{local coloring} $\chi\in[q]^{V_C}$ that, by the Markov property of the Potts model, is distributed according to the Potts model on $C$. 
For $\chi\in[q]^{V(C)}$, write $m(\chi)$ for the total number of monochromatic edges in $C$ (including any monochromatic edges between $V_C$ and the boundary), and given a vertex $u\in V_C$ write $m_u(\chi)$ for the number of monochromatic edges in $C$ incident to $u$. 
Then, with the local partition function defined as
\begin{align}
Z_C(q,\beta) &:= \sum_{\chi\in [q]^{V_C}}e^{-\beta m(\chi)}\,,
\end{align}
a local coloring $\chi\in [q]^{V_C}$ is distributed according to 
\[
\chi\mapsto\frac{e^{-\beta m(\chi)}}{Z_C(q,\beta)}\,.
\]

This fact means that we can interpret the internal energy per particle as an expectation over the random local view $C$ and local coloring $\chi$. 
Each edge of $G$ is incident to exactly two vertices, hence
\begin{align}
U_G(\beta) &= \frac{1}{|V(G)|} \E_\sigma[m(\sigma)]\\
&= \frac{1}{2|V(G)|} \sum_{v\in V(G)}\sum_{u\in N(v)} \P_\sigma(\text{$uv$ monochromatic})\\
&= \frac{1}{2}\E_C\Big[\sum_{u\in N(v)} \P(\text{$uv$ monochromatic}|C)\Big]\\
&= \frac{1}{2}\E_{C,\chi}[m_v(\chi)]\,.
\intertext{%
Moreover, since $G$ is regular, a neighbor of $v$ chosen uniformly at random is distributed uniformly over $V(G)$, giving}
U_G(\beta) &= \frac{1}{2}\E_{C,\chi}\Big[\frac{1}{3}\sum_{u\in N(v)} m_u(\chi)\Big]\,.
\end{align}
Given these observations, for a local view $C$ we define
\begin{align}
U^v_C &:= \frac{1}{2Z_C}\sum_{\chi\in[q]^{V_C}} m_v(\chi) e^{-\beta m(\chi)}\,,\\
U^N_C &:= \frac{1}{6Z_C}\sum_{\chi\in[q]^{V_C}}\sum_{u\in N(v)} m_u(\chi) e^{-\beta m(\chi)}\,,\\
\shortintertext{so that}
U_G(\beta) &= \E_C[U^v_C] = \E_C[U^N_C]\,,\label{eq:U}
\end{align}
giving us a \emph{constraint} on probability distributions on local views that holds for all distributions arising from graphs.

We can now define the two LP's for the $q$-color Potts model,
\begin{align}
  \{ U^{\min}, U^{\max} \} = \{ \text{Minimum, Maximum} \} \text{ of } &\sum_{C \in \mathcal C_q} p_{C} U^v_C\qquad\text{ subject to }\\
& p_C \ge 0 \;\forall C \in \mathcal C_q\,,\\
& \sum_{C \in \mathcal C_q} p_C = 1\,,\\
& \sum_{C \in \mathcal C_q} p_C (U^v_C-U^N_C) = 0\,,
\end{align}
The final constraint is an example of a \emph{consistency condition} that holds for all distributions on local views which arise from the experiment on a graph defined at the beginning of this section. 

\subsection{Minimizing}

For the minimization problem, the dual program (with variables $\Lam$, $\Delta$) is
\begin{align}
U^{\min}={}&\max\Lambda\qquad\text{ subject to }& \\
& \Lam+\Delta(U^v_C-U^N_C) \le U^v_C\qquad\forall C \in \mathcal C_q\,.
\end{align}
For a given $\beta >0$ and $q \ge 2$, to show that $U^{\min} = U_{K_{3,3}}$ via linear programming duality, we must find $\Delta^*$ so that the assignment $\Lam = U_{K_{3,3}}, \Delta = \Delta^*$ is \emph{feasible} for the dual. That is, 
\begin{equation}
\label{eq:dualFeas}
U_{K_{3,3}}+\Delta^*(U^v_C-U^N_C) \le U^v_C
\end{equation}
for all $C \in \mathcal C_q$. 

In fact it suffices to show \eqref{eq:dualFeas} on a subset of $\mathcal C_q$. We say $C, C' \in \mathcal C_q$ are equivalent if 
\begin{align*}
U^v_{C} \equiv U^v_{C'} \text{ and } U^N_{C} \equiv U^N_{C'}
\end{align*}
as functions of $q$ and $\beta$.  For instance if $C$ is obtained from $C'$ by a permutation of the $q$-colors, then $C$ and $C'$ are equivalent by symmetry.  This equivalence relation partitions $\mathcal C_q$ into equivalence classes. Call this set of equivalence classes $ \mathcal C_q'$. 
We always choose a representative member of the equivalence class that has an initial segment of the colors $[q]$ on its boundary, and write $q_C$ for the total number of colors on the boundary of a local view $C$. 
For $d=3$ and arbitrary $q$ there are $35$ non-isomorphic equivalence classes which we list in Appendix~\ref{sec:lvlist}.

We give the local views that arise in the optimizing graphs names, writing $C_1$ and $C_2$ (see Figure~\ref{fig:lvs}) for representatives of the only two equivalence classes of local views are that can appear with positive probability when $G = K_{3,3}$. 
In $K_4$, the only local view that can arise is isomorphic to $K_4$ itself.

To find a value of $\Delta^*$ we solve the dual constraint corresponding to local view $C_1$ (Figure~\ref{fig:lv2}) to hold with equality when $\Lam  = U_{K_{3,3}}$:
\[
U_{K_{3,3}}+\Delta^*(U^v_{C_1}-U^N_{C_1}) = U^v_{C_1}\,.
\]
Writing $\lam=e^{-\beta}$ (so that $0<\lam<1$), we find 
\begin{align}
Z_{C_1} &= (\lam^3+q-1)^3+(q-1)(\lam^2+\lam+q-2)^3 \,,\\
U^v_{C_1} &= \frac{3}{2Z_{C_1}}\left(\lam^3(1+\lam^3)^2+(q-1)\lam^2(\lam^2+\lam+q-2)^2\right)\\
U^N_{C_1} &= \frac{1}{2Z_{C_1}}\left(3\lam^3(1+\lam^3)^2+(q-1)(\lam+2\lam^2)(\lam^2+\lam+q-2)^2\right)
\shortintertext{and hence}
U^N_{C_1}-U^v_{C_1} &= \frac{1}{2Z_{C_1}}\lam(1-\lam)(q-1)(\lam^2+\lam+q-2)^2\,.
\end{align}

Then  we calculate
\begin{align}
Z_{K_{3,3}} &= q (\lam^3+q-1)^3+3q(q-1)(\lam ^2+\lam +q-2)^3+
\\&\qquad\qquad\qquad\qquad\qquad\qquad q(q-1)(q-2)(3 \lam +q-3)^3\,,\\
U_{K_{3,3}} &= \frac{3q}{2Z_{K_{3,3}}}\bigg(\lam^3(\lam^3+q-1)^2+\lam (q-1)(q-2)(3 \lam +q-3)^2+\\&\qquad\qquad\qquad\qquad (q-1)(2\lam^2+\lam)(\lam^2+\lam+q-2)^2\bigg)\,,
\end{align}
\begin{align}
\label{eq:delstar}
\Delta^* &=-\frac{3q(1-\lam)^2}{2(\lam^2+\lam+q-2)^2 Z_{K_{3,3}}}
\bigg(2 \left(\lambda ^4+9 \lambda ^3+22 \lambda ^2+27 \lambda +13\right) (\lambda -1)^6+
\\&\qquad\qquad 2 \left(\lambda ^2+2 \lambda -1\right) q^5+\left(\lambda ^5+3 \lambda ^4+20 \lambda ^3-41 \lambda +17\right) q^4+
\\&\qquad\qquad\left(8 \lambda ^4+31 \lambda ^3+91 \lambda ^2+47 \lambda -57\right) (\lambda -1)^2 q^3+
\\&\qquad\qquad\left(25 \lambda ^4+82 \lambda ^3+146 \lambda ^2+22 \lambda -95\right) (\lambda -1)^3 q^2+
\\&\qquad\qquad\left(2 \lambda ^5+36 \lambda ^4+87 \lambda ^3+93 \lambda ^2-31 \lambda -79\right) (\lambda -1)^4 q\bigg)\,.
\end{align}

\begin{claim}
\label{claim:slackMin}
For all $q \ge 2$ and all $\beta >0$, the function
\begin{equation}
\label{eq:slack}
SLACK(C) = U^v_C+\Delta^*(U^N_C - U^v_C) -U_{K_{3,3}}
\end{equation}
with $\Delta^*$ given by \eqref{eq:delstar}
is identically $0$ for $C\in \{ C_1, C_2\}$ and strictly positive for all other $C \in \mathcal C_q'$. 
\end{claim}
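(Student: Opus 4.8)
The plan is to verify the two equalities by direct (or structural) inspection, and then reduce each of the remaining $33$ inequalities to the positivity of a single bivariate polynomial, which is certified by an elementary change of variables.

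The equality $SLACK(C_1)\equiv 0$ holds by construction, since $\Delta^*$ was defined precisely by solving this equation with $\Lam=U_{K_{3,3}}$. For $C_2$ one may simply substitute the explicit polynomial‑over‑$Z_{C_2}$ expressions for $Z_{C_2},U^v_{C_2},U^N_{C_2}$ into \eqref{eq:slack} with $\Delta^*$ from \eqref{eq:delstar} and check that the resulting rational function vanishes identically. A more conceptual route avoids that computation: the local‑view distribution of $K_{3,3}$ is supported on $\{C_1,C_2\}$ with weights $p_{C_1},p_{C_2}>0$ summing to $1$, and (being a graph distribution) it satisfies both $\sum_C p_C(U^v_C-U^N_C)=0$ and $\sum_C p_C U^v_C=U_{K_{3,3}}$; forming $p_{C_1}SLACK(C_1)+p_{C_2}SLACK(C_2)$ and using these two relations shows the combination is $0$, and since $SLACK(C_1)\equiv 0$ and $p_{C_2}>0$ we get $SLACK(C_2)\equiv 0$.

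For each of the remaining $33$ equivalence classes $C\in\mathcal C_q'$ (listed in Appendix~\ref{sec:lvlist}) I would write out $Z_C$ (a polynomial in $\lambda=e^{-\beta}$ and $q$) and $U^v_C,U^N_C$ (such polynomials divided by $Z_C$). Since $Z_C>0$, $Z_{K_{3,3}}>0$, and $\lambda^2+\lambda+q-2>0$ for $\lambda\in(0,1)$ and $q\ge q_C$, clearing these denominators turns $SLACK(C)>0$ into $P_C(\lambda,q)>0$ where $P_C=2(\lambda^2+\lambda+q-2)^2 Z_C Z_{K_{3,3}}\cdot SLACK(C)$ is a polynomial. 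To certify this, first record a universal fact: $SLACK(C)=0$ at $\lambda=1$ for \emph{every} local view, because at $\beta=0$ each vertex of $C$ within distance one of $v$ retains all three of its incident edges, each monochromatic with probability $1/q$ under the uniform coloring, so $U^v_C=U^N_C=U_{K_{3,3}}=3/(2q)$, and moreover $\Delta^*$ carries a factor $(1-\lambda)^2$. Hence $(1-\lambda)\mid P_C$, and after the substitution $\lambda=1-x$, $q=2+y$ one obtains $P_C=x\,\widetilde P_C(x,y)$; the goal is then to show $\widetilde P_C$ has all coefficients nonnegative and is not identically zero, which yields $P_C>0$ on the relevant region $x\in(0,1)$, $y\ge 0$. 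For $C\in\{C_1,C_2\}$ the same reduction produces $P_C\equiv 0$.

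The main obstacle is the bulk and robustness of this last step. There are $33$ polynomials $P_C$, of degree up to roughly $18$ in $\lambda$ and sizeable degree in $q$, so in practice this is a (computer‑assisted) finite verification; moreover the crude ``nonnegative coefficients after $\lambda\mapsto 1-x$, $q\mapsto 2+y$'' certificate need not succeed verbatim in every case, and some may require first multiplying by a small power of $\lambda=1-x$ or of $2-\lambda=1+x$, splitting the range of $\lambda$, or regrouping monomials — exhibiting a valid certificate for all $33$ classes simultaneously is the delicate part. One case is worth isolating: for the $K_4$‑view (no boundary, vertex‑transitive) one has $U^N_C=U^v_C$, so the $\Delta^*$ term drops out and $SLACK(C)=U^v_C-U_{K_{3,3}}=U_{K_4}(\beta)-U_{K_{3,3}}(\beta)$; its positivity is the direct comparison of the two extremizers of Theorem~\ref{thm:d3Anti} and is handled as its own bivariate polynomial inequality. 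Finally one should be mindful that for small $q$ some boundary colour patterns are not realizable by an actual coloring; this causes no trouble since $C\in\mathcal C_q'$ already forces $q\ge q_C$, and it is cleanest to prove $P_C>0$ for all real $q\ge q_C$.
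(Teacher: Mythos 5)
Your overall strategy matches the paper's exactly: clear the denominators $Z_C$, $Z_{K_{3,3}}$, and $(\lambda^2+\lambda+q-2)^2$ coming from $\Delta^*$ to obtain a polynomial inequality, change variables so the admissible region becomes a positive orthant, and certify positivity by a (computer-assisted) check that all coefficients are nonnegative. The conceptual derivation of $SLACK(C_2)\equiv0$ from the $K_{3,3}$ local-view distribution (rather than direct substitution) is a genuine and elegant alternative to what the paper does, and your observation that for the $K_4$ local view $U^N_C=U^v_C$ so the $\Delta^*$ term drops out is correct.

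The gap is in the certificate itself, which is the whole substance of the claim. The paper's change of variables is $t=e^{\beta}-1=1/\lambda-1$ together with $r=q-3$ (see \eqref{eq:sslack} and Claims~\ref{minclaim1}--\ref{minclaim2}), with $q=2$ (i.e.\ $r=-1$) checked separately for the local views that have $q_C\le2$; the paper also divides out $t^2$, a double root at $\beta=0$, not a single factor of $1-\lambda$. Your substitution $\lambda=1-x$, $q=2+y$ is not the paper's and is unlikely to work verbatim: the map $\lambda\mapsto 1/\lambda-1$ sends $(0,1)$ \emph{onto} $(0,\infty)$, so nonnegative coefficients in $t$ is the natural ``tight'' certificate for $\beta>0$, whereas $\lambda\mapsto 1-x$ only maps $(0,1)$ to $(0,1)$, and a nonnegative-coefficients certificate in $x$ would have to prove positivity on all of $x>0$ (i.e.\ all $\lambda<1$, including $\lambda\le0$), a strictly stronger statement that need not hold. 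The same concern applies to $y=q-2$ versus $r=q-3$. You flag that your crude certificate ``need not succeed verbatim'' and list possible repairs, which is honest, but since the claim \emph{is} this finite verification, naming the correct change of variables and exhibiting the resulting polynomials with nonnegative coefficients (or the $r=-1$ specialization for $q=2$) is precisely the missing content.
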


Claim~\ref{claim:slackMin} immediately proves that $U^q_G(\beta) \ge U^q_{K_{3,3,}}(\beta)$.  To show uniqueness, observe that strict positivity of the slack function implies via complementary slackness that the support of any distribution achieving the optimum must be contained in $\{ C_1, C_2\}$; $K_{3,3}$ is the only connected graph whose distribution satisfies this.
To see this note that, for any other connected cubic graph, there exists a vertex $v$ with two neighbors $u_1$, $u_2$ such that the external neighborhoods of $u_1$ and $u_2$ are distinct. 
Then there exists a coloring such that the external neighbors of $u_1$ are monochromatic, whilst those of $u_2$ are not. 
This means a local view not isomorphic to $C_1$ or $C_2$ appears with positive probability.

In order to prove Claim~\ref{claim:slackMin}, we change variables and multiply the slack by a positive scaling factor, carefully chosen to result in a polynomial with positive coefficients. 
Write $r=q-3$ and $t=e^\beta-1=1/\lam -1$, so that for any $q\ge 3$ and $\beta>0$ we have $r\ge 0$ and $t>0$. 
It then suffices to show that the following scaling of the slack is non-negative:
\begin{align}\label{eq:sslack}
\tilde S_C = \frac{4(1+t)^{17} (r (1+t)^2+t^2+3 t+3)^2}{(3 + r)t^2} Z_{K_{3,3}}Z_C\cdot SLACK(C)\,.
\end{align}
In fact something stronger is true: 
\begin{claim}
\label{minclaim1}
For all $C \in \mathcal C_q'$, $\tilde S_C$ is a bivariate polynomial in $r$ and $t$ with all coefficients positive.  The polynomial is identically $0$ if and only if $C \in \{C_1, C_2\}$.   
\end{claim}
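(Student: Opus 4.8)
The plan is a finite, largely computational verification, organised around the $35$ equivalence classes of local views catalogued in Appendix~\ref{sec:lvlist}. Throughout I take $q\ge 3$, so that $r=q-3\ge 0$; the Ising case $q=2$, where $r=-1$ lies outside this range, is treated separately (for instance by specialising $r=-1$ and checking the resulting univariate polynomials in $t$, or by appealing to the $q=2$ result obtained via the bipartite swapping trick). Fix a representative $C$ of each class. Because the external neighbours attached to distinct vertices $u\in N(v)$ interact only through vertices of $C$ that they share, the weighted sum defining $Z_C$ factorises once one conditions on the colour of $v$ together with the colours of any vertices lying on a triangle of $C$: for each such conditioning the surviving $u_i\in N(v)$ contribute independent low-degree factors in $\lambda=e^{-\beta}$ and $q$, exactly as in the displayed computation of $Z_{C_1}$. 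The same conditioning expresses $U^v_C$ and $U^N_C$ as $Z_C^{-1}$ times explicit polynomials in $\lambda$ and $q$, obtained by inserting the appropriate edge-counting weights into those factorised forms.

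Next I would verify that the scaling in \eqref{eq:sslack} genuinely turns $SLACK(C)$ into a polynomial $\tilde S_C$ in $r$ and $t=e^{\beta}-1$. The explicit factor $Z_C$ clears the denominators of $U^v_C$ and $U^N_C$; the explicit factor $Z_{K_{3,3}}$ clears the denominators of $U_{K_{3,3}}$ and of $\Delta^*$, and, being itself divisible by $q=3+r$, also cancels the $(3+r)$ in the denominator of \eqref{eq:sslack}; since $(1+t)^2(\lambda^2+\lambda+q-2)=r(1+t)^2+t^2+3t+3$ under $\lambda=1/(1+t)$, $q=r+3$, the factor $\bigl(r(1+t)^2+t^2+3t+3\bigr)^2$ cancels the surviving $(\lambda^2+\lambda+q-2)^2$ from $\Delta^*$; and $(1+t)^{17}$ absorbs all the negative powers of $\lambda$. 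It then remains to see that $t^2$ divides what is left, i.e. that $SLACK(C)$ has a zero of order at least two at $\beta=0$: this holds because $U^v_C(0)=U^N_C(0)=U_{K_{3,3}}(0)=3/(2q)$ (at $\beta=0$ the Potts model is the uniform product measure and $v$ and each $u\in N(v)$ have degree $3$ in $C$), and because a second-moment computation at $\beta=0$ — in which the covariance of two monochromatic-edge indicators vanishes unless the edges coincide — shows the $\beta$-derivatives of $U^v_C$, $U^N_C$ and $U_{K_{3,3}}$ at $0$ agree as well.

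The substance of the claim is the assertion about the signs of the coefficients, and here I see no route other than expanding each $\tilde S_C$ explicitly (by computer algebra) and reading them off. One checks that exactly two of the $35$ polynomials are identically zero, namely $\tilde S_{C_1}$ and $\tilde S_{C_2}$: the former is automatic, since $\Delta^*$ was defined in \eqref{eq:delstar} precisely so that $SLACK(C_1)=0$, and the latter is a direct verification — and is forced in any case, since $C_1$ and $C_2$ are the only local views of positive probability in $K_{3,3}$, so complementary slackness at the optimum demands both dual constraints be tight. For the remaining $33$ classes one verifies that every coefficient of $\tilde S_C$ is strictly positive; because $r\ge 0$ and $t>0$ this gives $SLACK(C)>0$, which is exactly the strict form of the dual-feasibility inequality \eqref{eq:dualFeas} that, by weak duality, forces $U^{\min}=U_{K_{3,3}}$, and the uniqueness part of Claim~\ref{claim:slackMin} then follows by the complementary-slackness argument sketched in the text.

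The main obstacle is not conceptual but one of keeping the algebra under control: $Z_{K_{3,3}}$ alone has degree $9$ in $\lambda$ and $6$ in $q$, there are $35$ cases, and the coefficient-positivity is a fragile feature of the specific scaling chosen. Any positive multiplier clearing the denominators would suffice for the inequality $SLACK(C)\ge 0$, but would in general leave some negative coefficients (nonnegativity is only needed on $r\ge 0$, $t>0$, which is weaker than having nonnegative coefficients), so the real ingenuity lies in the reverse-engineering that produced the exponent $17$, the denominators $(3+r)t^2$, the factor $\bigl(r(1+t)^2+t^2+3t+3\bigr)^2$ and the constant $4$ in \eqref{eq:sslack}; once that scaling is fixed the verification is mechanical. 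A secondary caveat is that the claim is needed on the full closed range $r\ge 0$, $t>0$ (in particular at $r=0$, i.e. $q=3$), which the coefficient certificate covers automatically, and at $q=2$, which it does not and which must be argued apart.
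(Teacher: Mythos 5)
Your proposal is correct and follows essentially the same approach as the paper: the claim is proved by having a computer algebra system expand the scaled slack $\tilde S_C$ for each of the $35$ equivalence classes of local views and inspecting the coefficients. The additional justifications you supply---that the chosen scaling clears all denominators (via $Z_C$, $Z_{K_{3,3}}$, the factor $\bigl(r(1+t)^2+t^2+3t+3\bigr)^2 = (1+t)^4(\lam^2+\lam+q-2)^2$, and $(1+t)^{17}$), that $t^2$ divides because $SLACK(C)$ has a double zero at $\beta=0$, and that $\tilde S_{C_2}=0$ is forced by complementary slackness---are correct and make explicit what the paper leaves to the computation, but they do not constitute a different route.
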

For the case $q=2$ we do something slightly different.
\begin{claim}
\label{minclaim2}
For all $C\in \mathcal C_2'$ (which necessarily use at most $2$ colors on the boundary),  evaluating $\tilde S_C$ at $r=-1$ yields a polynomial in $t$ with positive coefficients. The polynomial is identically $0$ if and only if $C \in \{C_1, C_2\}$.   
\end{claim}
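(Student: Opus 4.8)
The plan is to prove Claim~\ref{minclaim2} by the same mechanism as Claim~\ref{minclaim1}, namely by exhibiting $\Lambda = U_{K_{3,3}}$ together with $\Delta = \Delta^*$ from~\eqref{eq:delstar} as a feasible dual solution; the point is that at $q = 2$ the finitely many dual constraints one must check are only those indexed by local views actually realizable with two colors. First I would pin down $\mathcal C_2'$: since the boundary colors of a local view form a sub-multiset of the colors used by the sampled coloring $\sigma$, a local view with three or more distinct boundary colors can never arise when $q = 2$, while conversely every local view with $q_C \le 2$ does arise on some cubic graph (the Potts model puts positive mass on all colorings, not only proper ones). Hence $\mathcal C_2'$ is exactly the sublist of the $35$ equivalence classes in Appendix~\ref{sec:lvlist} with $q_C \le 2$, and dual feasibility~\eqref{eq:dualFeas} at $q = 2$ need only be verified there. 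Both $C_1$ and $C_2$ lie in $\mathcal C_2'$, and because $K_{3,3}$ is bipartite its induced distribution on local views is supported on $\{C_1, C_2\}$ (at every $q$), which is what makes $\Lambda = U_{K_{3,3}}$ the correct target.

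Next I would explain why $q = 2$ genuinely needs a separate argument rather than following from Claim~\ref{minclaim1}. For each $C \in \mathcal C_q'$ the proof of Claim~\ref{minclaim1} produces an explicit bivariate polynomial $\tilde S_C(r,t)$ with nonnegative coefficients; but nonnegativity of the coefficients in $(r,t)$ does not force nonnegativity at $r = -1$, since odd powers of $r$ flip sign. So for $q = 2$ I would take, for each $C \in \mathcal C_2'$, that explicit polynomial $\tilde S_C(r,t)$ (equivalently, recompute $Z_C$, $U^v_C$, $U^N_C$ and $\Delta^*$ with $q$ set to $2$ and assemble~\eqref{eq:slack} and~\eqref{eq:sslack}), substitute $r = -1$, and simplify using $t = 1/\lambda - 1$. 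Along the way one checks that the scaling factor in~\eqref{eq:sslack} specializes at $r = -1$ to $\tfrac{2(1+t)^{17}(t+2)^2}{t^2}\,Z_{K_{3,3}} Z_C$, which (as $Z_{K_{3,3}}$ and $Z_C$ are sums of positive terms) is strictly positive for $t > 0$ and clears all denominators appearing in $SLACK(C)$, so that $\tilde S_C(-1,t)$ really is a polynomial in $t$ and has the same sign as $SLACK(C)$. Then one reads off the coefficients of $\tilde S_C(-1,t)$ for each of the finitely many $C \in \mathcal C_2'$ and verifies that they are all nonnegative, the polynomial being identically zero precisely for $C \in \{C_1, C_2\}$.

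For the uniqueness statement I note that, the scaling factor being strictly positive, $SLACK(C)$ vanishes identically in $\beta$ if and only if $\tilde S_C(-1,t) \equiv 0$, so the finite check yields exactly the claimed equality case $C \in \{C_1, C_2\}$; combined with the connectivity argument given immediately after Claim~\ref{claim:slackMin}, this shows that among cubic graphs only unions of $K_{3,3}$ attain $U^2_G(\beta) = U^2_{K_{3,3}}(\beta)$. Together with Claim~\ref{minclaim1}, this completes the proof of Claim~\ref{claim:slackMin}, and hence the minimization half of Theorem~\ref{thm:d3Anti}.

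The main obstacle is organizational rather than conceptual: one must correctly enumerate $\mathcal C_2'$ and carry out, for each member, the lengthy but entirely routine polynomial algebra, then confirm the sign of every coefficient of $\tilde S_C(-1,t)$ — a task whose real risk is arithmetic slips and which is best handled with symbolic computation. The one genuinely substantive point requiring care is the reduction to $\mathcal C_2'$: it must be true that no local view outside $\mathcal C_2'$ can occur in the defining experiment when only two colors are available, which is immediate once one observes that the boundary colors are a sub-multiset of the colors used by $\sigma$.
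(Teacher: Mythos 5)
Your proposal is correct and follows essentially the same route as the paper: reduce to the finitely many equivalence classes with $q_C\le 2$, evaluate the scaled slack $\tilde S_C$ at $r=-1$, and verify by (symbolic) computation that the resulting polynomial in $t$ has nonnegative coefficients and vanishes identically exactly for $C\in\{C_1,C_2\}$. The extra observations you add — that the positivity of $\tilde S_C(r,t)$'s coefficients does not transfer to $r=-1$ because odd powers of $r$ change sign, and that the scaling prefactor specializes at $r=-1$ to $\tfrac{2(1+t)^{17}(t+2)^2}{t^2}\,Z_{K_{3,3}}Z_C>0$ — are accurate and helpfully explain why the paper carries out this separate check, but the substance of the argument (a finite polynomial-coefficient verification via the Sage computation) is identical to what the paper does.
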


Claims~\ref{minclaim1} and \ref{minclaim2} are proved by simply computing the functions $\tilde S_C$  for each of the $35$ equivalence classes in $\mathcal C_q'$, simplifying and collecting coefficients.

We include in Appendix~\ref{sec:explaincode} an explanation of a computer program used to compute $\tilde S_C$ for each local view $C$. A version of the program in the Sage mathematical language and its output are available as ancillary files in the arXiv submission of this paper. 
Each of the steps the program performs are readily achievable by hand, though the number of steps and the size of the polynomials involved make this unappealing. 

The output shows a list of $\tilde S_C$ for all 35 non-isomorphic local views, demonstrating that it is zero for $C_1$ and $C_2$ and a non-zero polynomial in $r$ and $t$ with non-negative coefficients for all other $C$.
It also shows $\tilde S_C$ evaluated at $r=-1$ for local views $C$ which use at most $2$ colors on the boundary, yielding a non-zero polynomial in $t$ with non-negative coefficients for all such $C$ except $C_1$ and $C_2$, as desired.

\subsection{Maximizing}

To show that $K_4$ is the unique maximizer of the LP is somewhat more straightforward. 
Since the distribution yielding $K_4$ as a local view with probability one is feasible in the LP, it suffices to show that $U^v_{K_4} > U^v_C$ for all $C\neq K_4$.
\begin{claim}
\label{claim:max}
Let 
\begin{align}\label{eq:sDv}
D^v_C = 2 (1+t)^{14} Z_{K_4} Z_C t^{-2}\big(U^v_{K_4} -U^v_C\big)\,.
\end{align}
Then for all $C \in \mathcal C_q$, $D^v_C$ is a polynomial $t=e^\beta-1=1/\lam -1$ and $s=q-\max\{3,q_C\}$ with all positive coefficients, and identically $0$ if and only if $C= K_4$.
\end{claim}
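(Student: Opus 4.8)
The plan is to prove Claim~\ref{claim:max} by the explicit-computation route used for the minimization claims. Since $Z_C$ and the numerator $N^v_C:=\sum_{\chi\in[q]^{V_C}}m_v(\chi)e^{-\beta m(\chi)}=2Z_CU^v_C$ are, like $U^v_C$, invariant under relabelling the colours, it suffices to treat one representative of each of the $35$ classes in $\mathcal C_q'$, together with $K_4$. Over a common denominator,
\[
U^v_{K_4}-U^v_C=\frac{N^v_{K_4}Z_C-N^v_C Z_{K_4}}{2\,Z_{K_4}Z_C}\,,
\]
so after substituting $\lambda=1/(1+t)$ we have $D^v_C=(1+t)^{14}t^{-2}\big(N^v_{K_4}Z_C-N^v_C Z_{K_4}\big)$, and we then set $q=s+\max\{3,q_C\}$ and collect coefficients.

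For a fixed local view, $V_C=\{v\}\cup N(v)$ has four vertices and the boundary colours are prescribed, so $Z_C$ and $N^v_C$ are four-fold sums over $[q]^{V_C}$: grouping colourings by which of the three edges at $v$, which triangle edges, and which $u_i$--boundary edges are monochromatic, the number of colourings realizing each pattern is a polynomial in $q$, and assembling these gives $Z_C$ and $N^v_C$ as polynomials in $\lambda$ and $q$; the same computation for $K_4$ (no boundary, six edges) gives $Z_{K_4}$ and $N^v_{K_4}$. Two facts make the scaling in \eqref{eq:sDv} exactly the right one. First, at $\beta=0$ every cubic local view has $U^v_C=3/(2q)$, and a short computation shows the first $\beta$-derivative at $0$ is also the same for all $C$ (the relevant covariance of ``$m_v$'' with the total number of monochromatic edges under the uniform colouring depends only on $q$), so $U^v_{K_4}-U^v_C$ has a double zero at $t=0$ and dividing by $t^2$ is legitimate. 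Second, $N^v_{K_4}Z_C-N^v_C Z_{K_4}$ has $\lambda$-degree at most $14$: $Z_{K_4}$ has degree $6$, and $Z_C$ has degree at most $9-\tau$, where $\tau$ is the number of triangle edges of $C$, with degree $9$ possible only when all boundary vertices share a single colour (the class of $C_1$), where the leading coefficients of the two products cancel. Hence the expression in \eqref{eq:sDv} is genuinely a polynomial, and after $q=s+\max\{3,q_C\}$ a polynomial in $t$ and $s$.

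It remains to check, class by class, that this polynomial has all coefficients nonnegative and is not identically zero unless $C=K_4$ (the latter being immediate, since then the numerator vanishes). Because the polynomials are large this is done, as in the minimization, by a short computer-algebra script whose individual steps---expand the sums, substitute, collect---are elementary; the main obstacle is precisely the bulk of this computation, together with verifying that the chosen scaling is tight enough to leave neither a pole nor a negative coefficient. For the application to Theorem~\ref{thm:d3Anti} one records two further points: for each $C\neq K_4$ the $s^0$-part of $D^v_C$ (its value at $q=\max\{3,q_C\}$) is itself a nonzero polynomial in $t$ with nonnegative coefficients, which gives the \emph{strict} inequality $U^v_{K_4}>U^v_C$ for every $\beta>0$ and $q\ge3$; and the case $q=2$ (where $s=-1$) is treated separately by evaluating $D^v_C$ at $s=-1$ and checking positivity of the resulting univariate polynomials in $t$, in the manner of Claim~\ref{minclaim2}. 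With strict positivity in hand, feasibility of the point mass on the $K_4$ local view shows $U^{\max}=U^v_{K_4}=U_{K_4}$, and complementary slackness forces any optimal distribution to be supported on $\{K_4\}$; among connected cubic graphs this holds only for $K_4$, so equality in the upper bound of Theorem~\ref{thm:d3Anti} occurs exactly for disjoint unions of $K_4$'s.
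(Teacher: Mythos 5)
Your proposal is correct and follows essentially the same route as the paper: reduce to the $35$ equivalence classes of local views, clear denominators with the stated scaling, substitute $q = s + \max\{3,q_C\}$, and verify by computer algebra that each resulting polynomial in $s,t$ has nonnegative coefficients and is nonzero except for $C = K_4$, with the $q=2$ case handled by a separate univariate check. The additional a priori observations you supply --- that $U^v_{K_4} - U^v_C$ has a double zero at $t=0$ (both the value $3/(2q)$ and the derivative $-\tfrac12\mathrm{Cov}(m_v,m) = -\tfrac{3(q-1)}{2q^2}$ at $\beta=0$ are independent of $C$), and that $N^v_{K_4}Z_C - N^v_C Z_{K_4}$ has $\lambda$-degree at most $14$ --- are correct and explain why the scaling factors are exactly right, but the paper does not include them, relying instead on the computation itself to reveal that $D^v_C$ is a polynomial.
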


Since local views with $q_C>q$ cannot occur, for $q\ge3$ and $\beta>0$ we have $s\ge0$ and $t\ge0$ and hence Claim~\ref{claim:max} implies $U^v_{K_4} > U^v_C$ for all $C\neq K_4$. 
The quantity $D^v_C$ is listed for all 35 non-isomorphic local views in Appendix~\ref{sec:lvlist}.
Again, for $q=2$ we must do more; for $C \in \mathcal C_2'$  we list $D^v_C$ evaluated at $q=2$, observing that it is a polynomial in $t$ with non-negative coefficients, except for $K_4$ where it is zero. 

As with the computations for $\tilde S_C$, we use a computer to multiply polynomials and obtain $D^v_C$ for each local view, 
see Appendix~\ref{sec:explaincode}.

\section{Extensions to \texorpdfstring{$d \ge 4$}{d at least 4}?}\label{sec:Alld}

How might we extend Theorem~\ref{thm:d3Anti} to graphs of larger degree?  The minimization program defined above in Section~\ref{sec:proof} is not tight in general: we can in fact see that it is underconstrained by comparing the number of constraints ($2$) to the number of equivalence classes of local views in the support of the distribution induced by $K_{d,d}$, which is the partition number of $d-1$ when $q \ge d-1$, and always more than $2$ if $d \ge 4$ and $q \ge 2$.  

There is a large family of constraints that we can add to the program.  Let $\mathcal S_{q,d}$ be the set of all \emph{$q$-partitions} of size $d$; that is, partitions of $d$ into at most $q$ parts which we represent by vectors of length $q$ with non-negative integer entries that sum to $d$, written in non-decreasing order.  Any $q$-coloring $\chi$ of $d$ vertices induces a $q$-partition; for instance if $\chi$ assigns the colors $\{1,4,2,2,1,2\}$, then the $q$-partition $H(\chi) = \{3,2,1,0\} \in \mathcal S_{4,6}$.  Our family of constraints will be that for every $S \in \mathcal S_{q,d}$, the probability that the neighbors of $v$ receive a coloring with $q$-partition $S$ equals the average probability of the same for a neighbor of $v$.

Both of these probabilities can be computed as expectations over the random local view. For a local view $C$ and a $q$-partition $S \in \mathcal S_{q,d}$ we define
\begin{align*}
\gamma^{v,S}_C &:= \frac{1}{Z_C}\sum_{\chi\in[q]^{V_C}} \mathbf 1_{\{H(\chi(N(v)))=S\}} \cdot e^{-\beta m(\chi)}\,,\\
\gamma^{N,S}_C &:= \frac{1}{d} \frac{1}{Z_C}\sum_{\chi\in[q]^{V_C}}\sum_{u\in N(v)}\mathbf 1_{\{H(\chi(N(u)))=S\}}\cdot e^{-\beta m(\chi)}\,.
\end{align*}
Observe that for any graph and any $q$-partition $S$, we must have
\begin{align*}
 \E_C[\gamma^{v,S}_C] &= \E_C[\gamma^{N,S}_C]\, .\label{eq:gamVu}
\end{align*}

Our minimization program becomes
\begin{align}
 U^{\min} =  \text{Minimum of } &\sum_{C} p_{C} U^v_C\qquad\text{ subject to }\\
& p_C \ge 0 \;\forall C\,,\\
& \sum_{C} p_C = 1\,,\\
& \sum_{C} p_C (\gamma^{v,S}_C-\gamma^{N,S}_C) = 0\, \text{ for all } S\in \mathcal S_{q,d}.
\end{align}

This program is at least as strong as the one used in Section~\ref{sec:proof}: the $q$-partition constraints together imply the constraint $ \E_C[U^v_C] = \E_C[U^N_C]$.

We can solve this program for small values of $d$ and fixed $\beta$, which leads us to the following conjecture.
\begin{conj}
\label{conj:Dlp}
The above minimization LP is tight for $d \ge 3, q \ge d+1$ and all $\beta>0$, and shows that 
\[ U^q_{K_{d,d}}(\beta) \le U^q_G(\beta)  \]
for all $d$-regular $G$.
\end{conj}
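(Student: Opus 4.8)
The plan is to attack Conjecture~\ref{conj:Dlp} by linear programming duality, in the spirit of Section~\ref{sec:proof} but with the full family of $q$-partition consistency constraints in force. The dual of the minimization LP maximizes $\Lambda$ over free variables $\Lambda$ and $(\Delta_S)_{S\in\mathcal S_{q,d}}$ subject to
\[
\Lambda+\sum_{S\in\mathcal S_{q,d}}\Delta_S\big(\gamma^{v,S}_C-\gamma^{N,S}_C\big)\le U^v_C\qquad\text{for all }C,
\]
so it suffices to exhibit a feasible dual point with $\Lambda=U^q_{K_{d,d}}(\beta)$. The uniqueness clause then follows by complementary slackness: if a distribution on local views attaining the optimum is supported on the set $\mathcal T_d$ of views that occur for $K_{d,d}$, then for every vertex $v$ all neighbors $u\in N(v)$ have identical external neighborhoods, which together with triangle-freeness forces each component to be $K_{d,d}$, just as in the $d=3$ argument.

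First I would pin down $\mathcal T_d$: these are the triangle-free star-shaped views in which every leaf $u\in N(v)$ carries the same multiset $M$ of $d-1$ boundary colors, so up to color symmetry $\mathcal T_d$ is indexed by the partition type of $M$ and has $p(d-1)$ classes, where $p$ is the integer partition function (as already noted in Section~\ref{sec:Alld}). Next I would solve for the dual variables: complementary slackness demands equality in the dual constraint at every $C\in\mathcal T_d$, a linear system of $p(d-1)$ equations in the $\Delta_S$ which --- using the freedom that a common shift of all $\Delta_S$ is immaterial since $\sum_S\gamma^{v,S}_C=\sum_S\gamma^{N,S}_C=1$ --- is underdetermined, and which I expect to be solvable recursively by ordering $\mathcal S_{q,d}$ by refinement; each $\Delta_S$ should emerge as a rational function of $\lambda=e^{-\beta}$ and $q$ over a denominator built from $Z_{K_{d,d}}$ and elementary factors, with \eqref{eq:delstar} the instance visible for $d=3$.

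The crux is then to verify dual feasibility away from the support: with these $\Delta_S$, the slack
\[
SLACK(C)=U^v_C-U^q_{K_{d,d}}(\beta)+\sum_{S}\Delta_S\big(\gamma^{N,S}_C-\gamma^{v,S}_C\big)
\]
must be $\ge0$ for every $C\notin\mathcal T_d$ and strictly positive unless $C$ is equivalent to a member of $\mathcal T_d$. As in the $d=3$ proof I would clear denominators by multiplying through by $Z_{K_{d,d}}Z_C$ and a suitable positive prefactor, substitute $r=q-d$ (refined to $q-\max\{d,q_C\}$ where needed, cf.\ Claim~\ref{claim:max}) and $t=e^\beta-1$ so that $r\ge1>0$ and $t>0$ over the whole range $q\ge d+1$, $\beta>0$, and hope the resulting bivariate polynomial in $(r,t)$ has all coefficients non-negative; because $r\ge1$ here, no separate small-$q$ analysis analogous to Claims~\ref{minclaim1}--\ref{minclaim2} should be required.

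The main obstacle is precisely this last step, and it is severe. Unlike the matchings LP of~\cite{davies2015independent}, which admitted a single uniform duality proof for all degrees, here the optimal $\Delta_S$ and the slack polynomials depend on $d$ with no evident closed form, and the number of local-view classes $C$ to check grows super-polynomially in $d$ (roughly $p(d(d-1))$ up to color symmetry), so the brute-force, class-by-class verification that works for $d=3$ will never terminate. Obtaining all $d$ at once would seem to require a structural reason for the non-negativity of $SLACK(C)$ --- a combinatorial or probabilistic interpretation of it as a manifestly non-negative quantity (a relative entropy, a variance, or a count), or a decomposition of an arbitrary local view into ``$\mathcal T_d$-pieces plus a non-negative remainder'' compatible with the functionals $U^v$ and $\gamma^{v,S}$. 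A natural fallback is a staged argument: prove the bound first for $q$ very large as a function of $d$, where the Potts model is close to the uniform proper coloring and the results of Galvin and Zhao already apply, and then push $q$ down toward $d+1$; but doing so uniformly in $d$ is exactly challenge~(i) raised in the introduction, and I would expect it to demand genuinely new ideas.
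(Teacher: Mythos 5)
The statement you were given is Conjecture~\ref{conj:Dlp}, which the paper explicitly leaves open; it is the first of the two ``challenges for future work'' raised in the introduction. There is therefore no proof in the paper to compare against, and your submission is honestly framed as a research plan rather than a proof.

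As a plan, it tracks the paper's method faithfully: set up the $q$-partition LP from Section~\ref{sec:Alld}, write its dual with variables $\Lambda$ and $(\Delta_S)_{S\in\mathcal S_{q,d}}$, fix $\Lambda = U^q_{K_{d,d}}(\beta)$, use complementary slackness on the $p(d-1)$ local-view classes arising from $K_{d,d}$ to determine a dual solution, then clear denominators and substitute $t=e^\beta-1$, $r=q-d$ in the hope that each slack polynomial has non-negative coefficients. This is precisely the template of Section~\ref{sec:proof} with the enlarged constraint set, your observation that $r\ge 1$ under the hypothesis $q\ge d+1$ removes the need for the special $q=2$ treatment of Claim~\ref{minclaim2}, and your uniqueness sketch (same multiset on each $u\in N(v)$ forces identical external neighborhoods, which with triangle-freeness forces $K_{d,d}$ components) is the correct generalization of the argument in the paper.

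The genuine gap is the one you name yourself, and it cannot be elided: you do not produce the dual solution $(\Delta_S)$ in closed form for general $d$, and you do not prove the slack is non-negative on the remaining local views. ``Hope the resulting bivariate polynomial has all coefficients non-negative'' is a conjecture restated, not an argument; for $d=3$ the paper settles this only by machine-assisted enumeration over $35$ classes, and you correctly note that the number of classes grows super-polynomially in $d$, so class-by-class verification does not scale. Two further points deserve flagging. First, the linear system from complementary slackness is generically both over- and under-determined: you have $p(d-1)$ equations (one per supported view), $|\mathcal S_{q,d}|$ unknowns, and a one-dimensional shift symmetry, so for $d\ge 4$ there is a nontrivial solution space to choose from, and the choice can matter enormously for positivity. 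Second, the mere \emph{consistency} of those $p(d-1)$ equations at $\Lambda = U^q_{K_{d,d}}$ is itself equivalent to the conjecture being true (by strong LP duality) and is not something one may assume when trying to prove it; for $d=3$ this was a nontrivial verification (Claim~\ref{claim:slackMin} with two tight views and one free $\Delta$). Absent either an explicit family of dual certificates or a structural, $d$-uniform reason for slack non-negativity, the statement remains, as the paper says, a conjecture.
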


However we can also find values of $\beta$ for $d \ge 4, q \le d$ so that $U^{\min} < U^q_{K_{d,d}}$, and so we believe that this program is not tight in these cases.

\section{\texorpdfstring{$2$}{2}-regular graphs and other easy cases}\label{sec:d2q2}

Theorem~\ref{thm:d3Anti} shows that $K_{3,3}$ is optimal on the level of internal energy per particle in the Potts model, and by Corollary~\ref{cor:K33coloring} it maximizes $\frac{1}{|V(G)|} \log C_q(G)$ over cubic graphs $G$. 
For arbitrary $d$, in the case $q=2$, the fact that $K_{d,d}$ maximizes $\frac{1}{|V(G)|} \log C_q(G)$ over $d$-regular $G$ follows simply from the observation that $K_{d,d}$ is the smallest bipartite $d$-regular graph. 
Indeed for $q=2$, if $G$ is not bipartite then $C_q(G)=0$ and if $G$ is bipartite $C_q(G) = 2^{\#\text{ connected components of } G}$.

For $d=2$, the only $d$-regular connected graphs are cycles, and there is an explicit formula for the $q$-color Potts partition function of the $n$-cycle.
In the language of statistical physics the $1$-dimensional Potts model (including the $0$-temperature Potts model) is \emph{exactly solvable}:
\begin{align*}
Z_{C_n}^q(\beta) &= (q-1) (e^{-\beta}-1)^{n}+(e^{-\beta}+q-1 )^n.
\end{align*}
One way to obtain this formula is to use the mapping of the Tutte polynomial $T(x,y)$ to the Potts partition function, given in e.g.\ \cite{welsh2000potts}, and then using the formula $T_{C_n}(x,y) = \frac{x^n -x}{x-1} +y$.

Taking the logarithmic derivative gives:
\begin{align}
\label{eq:d2Beta}
U^q_{C_n} (\beta) &= \frac{e^{-\beta}}{e^{-\beta}-1} \cdot\frac{\left(1+\frac{q}{e^{-\beta}-1}\right)^{n-1}+q-1}{\left(1+\frac{q}{e^{-\beta}-1}\right)^{n}+q-1}
\end{align}

\begin{prop}\label{prop:occmono}
If $\beta>0$ then
\begin{align*}
U^q_{C_n} (\beta)&>U^q_{C_{n+2}} (\beta)\text{ for $n\geq3$ odd},\\
U^q_{C_n} (\beta)&<U^q_{C_{n+2}} (\beta)\text{ for $n\geq4$ even}.
\end{align*}
If $\beta<0$ then
\begin{align*}
U^q_{C_n} (\beta)&>U^q_{C_{n+1}} (\beta)\text{ for all $n\geq3$}.
\end{align*}
\end{prop}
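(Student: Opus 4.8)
The plan is to work directly from the closed form \eqref{eq:d2Beta}, reducing both halves of the proposition to the sign of a single rational expression. First I would substitute $\lambda = e^{-\beta}$ and introduce the shorthand
\[
x := 1 + \frac{q}{\lambda - 1} = \frac{q-1+\lambda}{\lambda-1},
\]
so that \eqref{eq:d2Beta} becomes $U^q_{C_n}(\beta) = \dfrac{\lambda}{\lambda-1}\cdot\dfrac{x^{n-1}+q-1}{x^n+q-1}$. Next I would compute the difference $U^q_{C_n}(\beta) - U^q_{C_{n+k}}(\beta)$ for a general step $k\ge 1$ by placing it over the common denominator $(x^n+q-1)(x^{n+k}+q-1)$. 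Expanding the numerator, the terms $x^{2n+k-1}$ and $(q-1)^2$ cancel and the remainder factors, giving the identity
\[
U^q_{C_n}(\beta) - U^q_{C_{n+k}}(\beta) = \frac{\lambda}{\lambda-1}\cdot\frac{(q-1)\,(1-x)\,x^{n-1}\,(1-x^k)}{(x^n+q-1)(x^{n+k}+q-1)}.
\]
Everything after this is sign bookkeeping.

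For $\beta>0$ one has $0<\lambda<1$, hence $\tfrac{\lambda}{\lambda-1}<0$; the numerator $q-1+\lambda$ of $x$ is positive while its denominator $\lambda-1$ is negative, so $x<0$, and $|x| = \tfrac{q-1+\lambda}{1-\lambda}>1$ because $q-2+2\lambda>0$ for $q\ge 2$. Thus $x<-1$, so $1-x>0$, $1-x^2<0$, $\operatorname{sign}(x^m)=(-1)^m$, and $|x^m|$ is increasing in $m$. For the factors $x^m+q-1$ I would use either that $U^q_{C_n}(\beta)=\tfrac1n\E_\sigma[m(\sigma)]>0$ (the constant colorings already contribute), which forces $\tfrac{x^{n-1}+q-1}{x^n+q-1}<0$ and hence, via the parity of $x^m$, that $x^m+q-1>0$ for $m$ even and $<0$ for $m$ odd; or, more elementarily, the bound $|x|^m\ge|x|>q-1$. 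Taking $k=2$ in the identity: when $n\ge3$ is odd the factors $\tfrac{\lambda}{\lambda-1},\,(q-1),\,(1-x),\,x^{n-1},\,(1-x^2)$ carry signs $-,+,+,+,-$ while $(x^n+q-1)(x^{n+2}+q-1)$ is a product of two negatives, giving a positive difference; when $n\ge4$ is even the signs are $-,+,+,-,-$ over a product of two positives, giving a negative difference. These are exactly the first two inequalities. For $\beta<0$ one has $\lambda>1$, so $\tfrac{\lambda}{\lambda-1}>0$ and $x=1+\tfrac{q}{\lambda-1}>1$; hence $1-x<0$, $x^{n-1}>0$, and $x^m+q-1>0$ for every $m$. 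Taking $k=1$, the numerator of the identity equals $(q-1)(1-x)^2x^{n-1}>0$, and with the positive leading factor and positive denominator this yields $U^q_{C_n}(\beta)>U^q_{C_{n+1}}(\beta)$ for all $n\ge3$.

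I expect no genuine obstacle here: the whole content is the algebraic identity, after which the conclusion is forced by parity. The only step requiring a moment's care is determining the sign of $x^m+q-1$ when $x<-1$, which is why I would record the two independent ways of pinning it down (positivity of $U^q_{C_n}$, or the elementary bound $|x|>q-1$); both are short and routine.
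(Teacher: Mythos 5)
Your proof is correct and takes essentially the same approach as the paper: both substitute $x = 1 + q/(e^{-\beta}-1)$ and reduce the comparison of consecutive values to a polynomial inequality in $x$ via \eqref{eq:d2Beta}, using $x<-1$ when $\beta>0$ and $x>1$ when $\beta<0$. Your write-up is slightly more explicit, deriving a single identity for a general step $k$ and pinning down the signs of the factors $x^m+q-1$, which the paper's ``if and only if'' reduction to $x^n+x^{n+1}>x^{n-1}+x^{n+2}$ leaves implicit (it relies on the two denominators sharing a sign).
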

\begin{proof}
Let $\beta>0$ and suppose that $n\ge3$ is odd. Let $x:=1+\frac{q}{e^{-\beta}-1}$. By \eqref{eq:d2Beta}, we then have that $U^q_{C_n} (\beta)>U^q_{C_{n+2}}$ if and only if  
\begin{equation}\label{eq:oddmono}
x^n+x^{n+1}>x^{n-1}+x^{n+2}.
\end{equation}
Since $n$ is odd, \eqref{eq:oddmono} holds if and only if $x+x^2>1+x^3$ which holds since $x<-1$. For even $n\geq4$, the proof is the same. 

Suppose now that $\beta<0$ and $n\geq3$. Defining $x$ as before, we have that $x>0$. In this case, the inequality $U^q_{C_n} (\beta)>U^q_{C_{n+1}} (\beta)$ simply reduces to the inequality $x^{n-1}(1-x)^2>0$.  
\end{proof}

Letting $\mathbb{Z}_1$ denote the infinite line we have that 
\begin{equation}
\label{eq:linelimit}
 U^q_{\mathbb{Z}_1}(\beta)= \frac{e^{-\beta}}{e^{-\beta} + q-1} = \lim_{n\to\infty}U^q_{C_n}(\beta).
 \end{equation}
Taking the limit here is justified as the Potts model on $\mathbb Z_1$ is in the Gibbs uniqueness regime for all $q,\beta >0$.
\begin{cor}
Conjectures~\ref{conj:internal}, \ref{conj:ferro}, and \ref{conj:colorings} hold for $d=2$. Moreover, If $\beta>0$ then 
\begin{align*}
U^q_{C_n} (\beta)&> U^q_{\mathbb{Z}_1}(\beta)\text{ for $n\geq3$ odd},\\
U^q_{C_n} (\beta)&< U^q_{\mathbb{Z}_1}(\beta)\text{ for $n\geq4$ even}.
\end{align*}
If $\beta<0$ then
\begin{align*}
U^q_{C_n} (\beta)&>U^q_{\mathbb{Z}_1}(\beta)\text{ for all $n\geq3$}.
\end{align*}
\end{cor}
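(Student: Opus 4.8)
The strategy is to reduce the corollary to a comparison among cycles and then read everything off Proposition~\ref{prop:occmono} and the limit~\eqref{eq:linelimit}. The first step is the reduction: every $2$-regular graph is a vertex-disjoint union of cycles $C_n$ with $n\ge 3$, and each of the three quantities appearing in Conjectures~\ref{conj:internal}, \ref{conj:ferro}, \ref{conj:colorings} --- namely $U^q_G(\beta)$, $F^q_G(\beta)$, and $\tfrac{1}{|V(G)|}\log C_q(G)$ --- is a \emph{weighted average} over the connected components of $G$, with weights proportional to the numbers of vertices. This is immediate from $Z^q_{G_1\sqcup G_2}=Z^q_{G_1}Z^q_{G_2}$ and $C_q(G_1\sqcup G_2)=C_q(G_1)C_q(G_2)$ (together with additivity of $m$ and the product form of the Potts measure on a disjoint union); for $C_q$ one first disposes of the case where some component has no proper colouring, where $C_q(G)=0$ and the bound is trivial. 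Consequently the supremum and infimum of each quantity over all $2$-regular graphs is already attained on a single cycle, and it suffices to prove the corresponding inequalities among the cycles $C_n$, $n\ge 3$. Note $K_3=C_3$ and $K_{2,2}=C_4$.

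The second step assembles the monotone picture. For $\beta>0$, Proposition~\ref{prop:occmono} says that $U^q_{C_3}(\beta)>U^q_{C_5}(\beta)>\cdots$ is strictly decreasing along odd indices while $U^q_{C_4}(\beta)<U^q_{C_6}(\beta)<\cdots$ is strictly increasing along even indices, and by~\eqref{eq:linelimit} both sequences converge to $U^q_{\mathbb{Z}_1}(\beta)$. Hence $U^q_{C_n}(\beta)>U^q_{\mathbb{Z}_1}(\beta)$ for every odd $n\ge 3$ and $U^q_{C_n}(\beta)<U^q_{\mathbb{Z}_1}(\beta)$ for every even $n\ge 4$ --- this is the ``moreover'' part of the corollary for $\beta>0$ --- and, interleaving the two chains through the common limit, $U^q_{C_4}(\beta)\le U^q_{C_n}(\beta)\le U^q_{C_3}(\beta)$ for all $n\ge 3$, with equality only at $n=4$, respectively $n=3$. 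For $\beta<0$, Proposition~\ref{prop:occmono} gives that $U^q_{C_n}(\beta)$ is strictly decreasing in $n$, and the explicit formula~\eqref{eq:d2Beta} shows the limit is again $U^q_{\mathbb{Z}_1}(\beta)$, so $U^q_{C_n}(\beta)>U^q_{\mathbb{Z}_1}(\beta)$ for all $n\ge 3$ (the remaining ``moreover'' assertion) and the maximum over cycles is $U^q_{C_3}(\beta)$.

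Everything claimed in the corollary then follows. By the reduction, the display of the previous paragraph for $\beta>0$ is exactly the internal-energy half of Conjecture~\ref{conj:internal} for $d=2$; integrating in $\beta$ via~\eqref{eq:integrateFormula} gives the free-energy half. For $\beta<0$, the maximum being $U^q_{C_3}$ gives $U^q_G(\beta)\le U^q_{K_3}(\beta)$, the first line of Conjecture~\ref{conj:ferro} for $d=2$, and~\eqref{eq:integrateFormula} again yields the free-energy version; the triangle-free strengthening reduces, for $d=2$, to the cycles $C_n$ with $n\ge 4$, and follows from the same monotonicity (for $\beta\le 0$ directly on the level of $U$, and for all $\beta$ on the level of $F$ after integrating, using that $U^q_{C_n}(t)\ge U^q_{C_4}(t)$ for all $t>0$ and $n\ge 4$ from the picture above). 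Finally, Conjecture~\ref{conj:colorings} for $d=2$ is immediate from Lemma~\ref{lem:betatoColorings}: we have just established $U^q_G(\beta)\ge U^q_{K_{2,2}}(\beta)$ for all $2$-regular $G$ and all $\beta>0$, so the lemma gives $C_q(G)^{1/|V(G)|}\le C_q(K_{2,2})^{1/4}$. (For $q=2$ one may instead argue elementarily: $C_2(G)=0$ unless $G$ is a union of even cycles, and then $C_2(G)=2^{c}$ with $|V(G)|\ge 4c$.)

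Since Proposition~\ref{prop:occmono} already carries the analytic content, there is no genuinely hard step here; the argument is bookkeeping plus integration. The one point deserving care is the passage from the two interlaced strictly monotone sequences (odd cycles decreasing, even cycles increasing) to the conclusion that $C_3$ and $C_4$ are the global extremizers over \emph{all} cycles: for this one must use the common limit~\eqref{eq:linelimit}, not merely the monotonicity, to rule out a large even cycle overtaking $C_3$ from below or a large odd cycle undercutting $C_4$ from above. A secondary subtlety is that the internal-energy form of the triangle-free clause in Conjecture~\ref{conj:ferro}, taken literally for every $\beta$, holds here only for $\beta\le 0$ --- for $\beta>0$ the clique $K_{2,2}$ is the \emph{minimizer} of $U^q$, not the maximizer, so only the free-energy form survives for positive $\beta$ --- and the write-up should flag this when invoking that part of the conjecture.
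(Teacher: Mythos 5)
Your argument is correct and matches the paper's approach: the paper's proof simply cites Proposition~\ref{prop:occmono} and~\eqref{eq:linelimit}, and you supply the routine details it leaves implicit (the decomposition of $2$-regular graphs into cycles, the weighted-average structure of $U^q$, $F^q$ and $\tfrac{1}{|V|}\log C_q$ over components, the passage from the two interlaced strictly monotone subsequences through their common limit, and the integration step to pass from $U$ to $F$).

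Your observation about the triangle-free clause of Conjecture~\ref{conj:ferro} is a genuine catch that the paper glosses over. Read literally --- $U^q_G(\beta)\le U^q_{K_{d,d}}(\beta)$ for triangle-free $G$ and \emph{all} $\beta$ --- that clause is false already for $d=2$ at $\beta>0$ (since $U^q_{C_5}(\beta)>U^q_{\mathbb{Z}_1}(\beta)>U^q_{C_4}(\beta)$), and for $d=3$ it would contradict the uniqueness statement in Theorem~\ref{thm:d3Anti}: any triangle-free cubic $G$ that is not a union of $K_{3,3}$'s, e.g.\ the Petersen graph, satisfies $U^q_{K_{3,3}}(\beta)<U^q_G(\beta)$ for $\beta>0$. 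Only the free-energy form of the triangle-free clause survives for positive $\beta$, and your write-up correctly confines the $U$-inequality to $\beta\le 0$ and recovers the $F$-inequality for all $\beta$ by integrating the sign information on $(0,\beta)$ and $(\beta,0)$ separately; the paper's corollary should be read with the same caveat (or the quantifier on $\beta$ in the $U$-inequality of that clause should be restricted to $\beta\le 0$).
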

\begin{proof}
This follows from Proposition \ref{prop:occmono} and \eqref{eq:linelimit}. 
\end{proof}

\section*{Acknowledgements}

We thank Emma Cohen, Prasad Tetali, and Yufei Zhao for enjoyable and enlightening conversations about graph homomorphism problems. 

\bibliography{coloringsBib}
\bibliographystyle{abbrv}

\appendix
\section{List of local views}\label{sec:lvlist}
\input{listOfLVs.tex}

\section{Computing properties of local views}\label{sec:explaincode}

The verification that, for each of the local views shown in Appendix~\ref{sec:lvlist}, the scaled slack~\eqref{eq:sslack} and the scaled difference~\eqref{eq:sDv} are non-negative (and zero where required) was done with the aid of a computer. 
Here we describe some additional considerations required to perform this verification for arbitrary $q$ and $\beta$.

As noted in Section~\ref{sec:proof}, the number of equivalence classes of local views we must consider is bounded independently of $q$, and we only consider representatives of each equivalence class that use an initial segment of colors from $\{1,\dotsc,6\}$ on the boundary. 
In order to compute the partition function and other properties of a local view $C$, one is required to consider the $q^4$ possible local colorings of $V_C$. 
This too can be done in a way that is bounded independently of $q$ by considering equivalence classes of local colorings. 

Let $C$ be a local view (that uses an initial segment of $\{1,\dotsc,6\}$ on the boundary) and recall that $q_C$ is the largest color appearing on the boundary of $C$. 
Then given a local coloring $\chi$ of $V_C$, we can only see at most $q_C+4$ colors. 
After permuting colors not used on the boundary, we may assume that $\chi$ consists only of colors in $[q_C]$ and initial segment of $\{q_C+1,\dotsc,q_C+4\}$ (which may be empty). 
This means we are considering equivalence classes of local colorings and choosing a representative $\tilde\chi$ of each class such that, together with the colors on the boundary, we only ever color $C$ with an initial segment of $[q_C+4]$.

Then for arbitrary $q$ it suffices to consider at most $q_C+4\le10$ colors in the calculations for $Z_C$, $U^v_C$, and $U^N_C$. 
Given the set $Q_C\subset [q_C+4]^{V_C}$ of \emph{representative} local colorings $\tilde\chi$ such that $\tilde\chi$ uses an initial segment (which may be empty) of the colors $\{q_C+1,\dotsc,q_C+4\}$, and writing $\ell$ for the largest color used in $\tilde\chi$, the Potts model on $C$ induces the distribution
\[
\tilde\chi \mapsto 
\begin{cases*}
\frac{e^{-\beta m(\tilde\chi)}}{Z_C}\binom{q-q_C}{\ell-q_C} & if $\ell>q_C$\\
\frac{e^{-\beta m(\tilde\chi)}}{Z_C}& otherwise
\end{cases*}
\]
on $\tilde\chi\in Q_C$.

The consideration of these equivalence classes of local colorings means that for any $\beta$, $q$, and $C$, the quantities $Z_C$, $U^v_C$, and $U^N_C$ may be computed by summing over $Q_C$ whose size is bounded independently of $q$. 
Using this simplification, we used a \emph{SageMath}\footnote{\url{http://www.sagemath.org}} computer program (hosted with the arXiv version of this paper) to compute the scaled slack function $\tilde S_C$ and the scaled difference $D^v_C$ for each of the 35 local views. 
The program can be used to generate a document (included as a supplementary file) containing all the required polynomials so that the reader may verify the proof. 
In addition the program can check itself for non-negative coefficients and print these observations on request.

\end{document}